\title{On Chasles' Quadrilateral Theorem \\
}
\author{ Leah Wrenn Berman and Jürgen Richter-Gebert }

\documentclass{article}
\usepackage{graphicx}
\usepackage{amsthm}

\newtheorem*{theorem*}{Theorem}
\newtheorem*{theoremA*}{Theorem A}
\newtheorem*{theoremB*}{Theorem B}
\newtheorem*{theoremC*}{Theorem C}

\newtheorem{theorem}{Theorem}
\newtheorem{notheorem}{Not--a--Theorem}

\newtheorem{remark}{Remark}

\usepackage{amssymb}
\usepackage[font=small,labelfont=bf]{caption} 
\usepackage{color}
\usepackage{mathtools}
\usepackage{amsmath}
\usepackage[dvipsnames]{xcolor}
\usepackage[all]{nowidow}
\usepackage{float}

\usepackage{soul}
\usepackage[normalem]{ulem} 

\usepackage[color links=true, backref=page]{hyperref} 

\hypersetup{
  colorlinks,
  citecolor=blue,
  linkcolor=Red,
  urlcolor=Blue
  }
\def\upto{, \ldots ,}

\begin{document}
\newpage
\maketitle

\def\upto{, \ldots ,}
\def\lines{\square}
\def\points{\bigcirc}
\def\lines{\vee}
\def\join{\vee}
\def\points{\wedge}
\def\meet{\wedge}
\parskip=1mm
\setlength{\parindent}{.5cm}

\begin{abstract}
Chasles’ Quadrilateral Theorem \cite{Cha1843, Cha1860}  is a classical statement about four tangents to a conic that simultaneously circumscribe a circle. In its various formulations, it relates the concurrence of certain lines to the existence of confocal conics or inscribed circles.

We show that several classical and modern versions of this theorem are affected by subtle ambiguities arising from multiple solutions in the underlying geometric constructions. 
These ambiguities often enter through seemingly natural extensions of otherwise correct statements.

We provide a systematic analysis of these issues and present coherent formulations of the theorem that avoid these inconsistencies. In particular, we interpret the theorem in a projective framework and relate it to the Cayley–Bacharach theorem, which explains the underlying incidence structure.


\end{abstract}

\section{Chasles' Quadrilateral Theorem}
Sometimes in mathematics, statements are more subtle than one might initially expect.
This happened to us when we needed a suitable version of 
a classical theorem of Chasles about a quadrilateral whose lines are simultaneously tangent to a conic and a circle. We will refer to the theorem as
{\it Chasles' Quadrilateral Theorem} (CQT) for short.\footnote{
The terminology surrounding this theorem is somewhat confused in the modern literature. The classical result appears first in a short note of Chasles from 1843 \cite{Cha1843}, and historically the correct name would simply be `Chasles' Theorem'. However, Michel Chasles pointed out many important theorems. So this would not be distinctive enough. In the literature one finds two double attributions to CQT. One of them is ``Graves--Chasles''. It  seems to originate from a stylistic remark by Darboux, who occasionally referred to it as a ``propri\'{e}t\'{e} de Graves--Chasles''; however, Graves himself merely translated an earlier (1841) \cite{ChaGra1841} paper of Chasles that does not contain the tangential quadrilateral result. A different double naming, ``Chasles--Reye'', used for example by Izmestiev and Tabachnikov \cite{IzTa17}, perhaps has more justification, since Reye appears to have presented  the first correct formulation and  proof of the statement \cite{Rey1896}. Still, to avoid both historical inaccuracies and terminological ambiguity, we refer to the result simply as  Chasles' Quadrilateral Theorem~(CQT).
} 
We needed this statement in the context of
our work relating $(n_4)$ configurations to Poncelet's porism  \cite{BGRGT24a}. There, we prove the movability of large classes of 
$(n_4)$-configurations. An $(n_{4})$ (geometric) configuration is a collection of $n$ points and $n$ (straight) lines such that every point has four lines passing through it and every line is incident to four points. In the technical core of that work, we needed a version of {\it Chasles' Quadrilateral Theorem} that proves that four specific lines in our construction (that arise from a \emph{Poncelet Grid})
meet in a single point. At first, we were convinced that it should be easy to just quote Chasles' Quadrilateral Theorem from the literature, since CQT is a very classical theorem  dating back to the work of the 19th century geometers.
However, in our investigation of the literature, we realized that all of the versions that we found fell in one of the following two categories: Either they were limited to a scenario that was more restricted than the one we needed, or they \emph{claimed} what we need but their statement was flawed, asserting concurrence of lines under certain conditions for which we could find explicit counterexamples. These counterexamples have their origin in the occurrence of additional solutions that arise under certain conditions.

The aim of this article is to clarify Chasles' Quadrilateral Theorem and its various classical formulations, with particular emphasis on the subtle ambiguities that arise in their interpretation. We analyse how different distributions of hypotheses and conclusions affect the validity of the statement, and we provide a coherent framework that resolves these issues.

In what follows, we first encounter the different flavours of the theorem (projective, primal, dual, and Euclidean) and their ramifications. In particular, we show some of the corollaries that have been drawn from it and point to some places in which flawed extensions of the CQT are presented. Those flawed versions even date back to Chasles himself \cite{Cha1860} from 1860 and to the treatment of this statement by Darboux in 1917 \cite{Dar17}. However, several modern treatments of the matter also suffer from the same subtle difficulties (e.g., \cite{AkBo18, Iz19, StaB}). In Section 3, we give a detailed analysis of the theorem and its projective nature and relate it to the well-known Cayley--Bacharach Theorem (which is also closely related to Chasles, who discovered its first instance). We also present an approach following Chasles' original line of reasoning. In particular, we explain how the flawed formulations come from neglected multiple solutions that arise in the realization space of the core theorem, which may be easily overlooked. 

\medskip

\subsection{Pitfalls of Chasles' Quadrilateral Theorem }

In its purest form, Chasles' Quadrilateral  Theorem can be stated as follows (see Figure~\ref{fig:pure}).

\begin{theorem}\label{CGTpure}
{\bf (pure version of the CQT):} Let $\mathcal{X}$ be an ellipse in the Euclidean plane and let $a,b,c,d$ be four distinct tangents to $\mathcal{X}$. Then the following statements are equivalent.
\begin{itemize}
\item[(i)] $a,b,c,d$ are tangent to a circle $\mathcal{C}$.
\item[(ii)] The intersections  $P=a\meet b$ and $Q=c\meet d$ lie on a conic $\mathcal{R}$ confocal to $\mathcal{X}$.
\end{itemize}
\end{theorem}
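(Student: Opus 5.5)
The plan is to work entirely in the dual plane, where the statement becomes linear algebra on pencils of dual conics. A line $\ell=(u,v,w)$ is tangent to a conic iff it annihilates the associated dual quadratic form. Let $I,J$ be the circular points and $\Omega^*$ the degenerate dual conic $u^2+v^2$, i.e.\ the point pair $\{I,J\}$.

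We use the following standard descriptions.
\begin{itemize}
\item The conics confocal to $\mathcal{X}$ are exactly those with dual form $\mathcal{X}^*+t\,\Omega^*$.
\item The dual form of a circle with center $m=(p,q,1)$ and radius $r$ is $r^2\Omega^*-M$, where $M(\ell)=(\ell\cdot m)^2$ is the doubled point $m$.
\item For the point pair $\{P,Q\}$ the dual form is $D(\ell)=(\ell\cdot P)(\ell\cdot Q)$. Since $a,b,c,d$ are distinct tangents of $\mathcal{X}$, the dual conics tangent to all four of them form the pencil spanned by $\mathcal{X}^*$ and $D$.
\end{itemize}

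\emph{(i)$\Rightarrow$(ii).} Both $\mathcal{X}^*$ and $\mathcal{C}^*=r^2\Omega^*-M$ lie in the dual pencil of $a,b,c,d$. So some member is the degenerate member $D$, i.e.\ $D=\lambda\mathcal{X}^*+\mu(r^2\Omega^*-M)$. Rearranging gives $R^*:=\lambda\mathcal{X}^*+\mu r^2\Omega^*=D+\mu M$, which is the dual of a conic $\mathcal{R}$ confocal to $\mathcal{X}$. To see $P\in\mathcal{R}$, restrict $R^*$ to the pencil of lines through $P$. There $D$ vanishes identically, so $R^*$ restricts to $\mu(\ell\cdot m)^2$. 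This is a perfect square, so exactly one tangent from $P$ to $\mathcal{R}$ exists, counted twice, namely the line $Pm$; hence $P$ lies on $\mathcal{R}$. The same argument applies to $Q$. One must check that $\lambda\neq 0$ and that $\mathcal{R}$ is a genuine real conic; the former holds because $D$ is not in the span of $\Omega^*$ and $M$ for $P,Q$ finite and distinct.

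\emph{(ii)$\Rightarrow$(i).} Let $R^*=\mathcal{X}^*+t\Omega^*$ be the confocal conic through $P$ and $Q$, with tangents $\tau_P,\tau_Q$ at these points, and set $m=\tau_P\meet\tau_Q$. The key step, which I expect to be the main obstacle, is to show $R^*=\alpha M+\beta D$ with $M$ the doubled point $m$. I would do this in a projective frame with $P=e_0$, $Q=e_1$, $m=e_2$.
\begin{itemize}
\item Restricted to lines through $P$ (that is, $u_0=0$), $R^*$ must be a square vanishing doubly at $\tau_P=Pm$. This forces the $u_1^2$ and $u_1u_2$ coefficients to vanish.
\item Symmetrically at $Q$, the $u_0^2$ and $u_0u_2$ coefficients vanish.
\item Hence $R^*=\alpha u_2^2+\beta u_0u_1=\alpha M+\beta D$.
\end{itemize}
Then $\mathcal{X}^*-\beta D=\alpha M-t\,\Omega^*$. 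The left side lies in the dual pencil tangent to $a,b,c,d$, and the right side is the dual of a circle centered at $m$ with $r^2=-t/\alpha$. This gives $\mathcal{C}$.

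The delicate points to treat separately are exactly where the statement can break.
\begin{itemize}
\item If $\tau_P\parallel\tau_Q$, then $m$ is at infinity and $\alpha M-t\Omega^*$ is not a circle.
\item If $\alpha=0$, or if $r^2\le 0$, the ``circle'' is degenerate or imaginary. Reality needs a sign argument using that $\mathcal{X}$ is an ellipse and that $P,Q$ lie on a common confocal conic.
\item $P$ and $Q$ could be the same point, or the line $PQ$ could be isotropic.
\end{itemize}
In each case I would check whether the hypotheses exclude the configuration or whether it yields a genuine exception that the statement must rule out.
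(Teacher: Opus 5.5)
Your core argument is correct, but it is not the route the paper takes.

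\textbf{The paper's route.} The paper proves Theorem~1 only indirectly, through Theorem~2. It first moves $\mathsf{I},\mathsf{J}$ to general position and dualises to the primal Theorem~5. It then treats that configuration as a degenerate instance of the 16-point Cayley--Bacharach theorem for two quartics (four lines versus two lines plus a conic). Finally, a limiting argument lets the two lines merge into a double line.

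\textbf{Your route.} You stay in line coordinates and do linear algebra on dual conics. In effect you carry out explicitly the ``Chasles quadrilateral in $\mathbb{P}^5$'' that the paper only sketches in its historical remarks. Its vertices are $\mathcal{X}^*,\mathcal{R}^*,\mathcal{C}^*,\Omega^*,D,M$, and its four dependent triples are $\{\mathcal{X}^*,\mathcal{R}^*,\Omega^*\}$, $\{\mathcal{X}^*,\mathcal{C}^*,D\}$, $\{\mathcal{C}^*,\Omega^*,M\}$ and $\{\mathcal{R}^*,D,M\}$. Each direction of your proof supplies two triples sharing a vertex and deduces the other two. Your frame computation $R^*=\alpha M+\beta D$ is exactly the tangency (multiplicity-two) dependency that the paper obtains only through the limit.

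\textbf{Trade-offs.} Your route is elementary and self-contained: no Cayley--Bacharach, no limit process, and no ``16 distinct points'' genericity. It also yields the correct tangent statement for free (the tangent to $\mathcal{R}$ at $P$ is $Pm$), and it makes the exceptional loci explicit. The paper's route instead explains the ``three incidences for free'' phenomenon and the construction-order ambiguity. It also handles Theorem~4 and the Akopyan--Bobenko configuration uniformly.

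\textbf{Your delicate points.} Several of them close easily.
\begin{itemize}
\item $\alpha\neq 0$ is automatic, because $\alpha=0$ would make $R^*=\beta D$ degenerate.
\item There is a sign slip: $\alpha M-t\Omega^*=-\alpha\bigl((t/\alpha)\Omega^*-M\bigr)$, so $r^2=t/\alpha$, not $-t/\alpha$.
\item Reality needs no ellipse-specific sign argument. Vanishing at $a$ gives $r^2\Omega^*(a)=M(a)\ge 0$ with $\Omega^*(a)>0$ for a real finite line. If $r^2=0$, then $m$ lies on $a$ and $b$, so $m=P$, and likewise $m=Q$, which is impossible.
\item $P\neq Q$ and non-isotropy of $PQ$ are automatic for four distinct real tangents.
\end{itemize}

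The remaining two cases cannot be ``checked away''. They are genuine exceptions to the statement as literally worded, and the paper's proof passes over them tacitly through its genericity assumptions.
\begin{itemize}
\item \emph{Parallel tangents in (ii)$\Rightarrow$(i).} Take $Q=-P$ on a confocal ellipse, so $\tau_P\parallel\tau_Q$. Then $c=-a$ and $d=-b$ form a parallelogram. Its only possible incircle is centred at the origin with $\mathrm{dist}(0,a)=\mathrm{dist}(0,b)$, and this fails unless $P$ lies on an axis. Your algebra then returns the degenerate member $\{a\meet c,\,b\meet d\}$ on the line at infinity instead of a circle.
\item \emph{Degenerate $\mathcal{R}$ in (i)$\Rightarrow$(ii).} Dually, $D+\mu M$ is degenerate exactly when $m$ lies on $PQ$. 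An example is a circle centred on the major axis with $a,b$ mirror images. Then $R^*$ is the focal point pair, and in general no nondegenerate confocal conic contains both $P$ and $Q$.
\end{itemize}
So the right outcome of your case analysis is that these configurations must be excluded (or degenerate members admitted). The hypotheses do not rule them out.
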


Stated in this way, the theorem is a statement of Euclidean geometry, since it explicitly refers
to a {\it circle} and {\it confocal conics}. Nevertheless, the core of this statement is of a projective nature, which we will later investigate more closely. For the moment, we will stick to the Euclidean version of the statement and elaborate on the configuration of geometric objects that we might associate 
with~it.
 \begin{figure}[ht]
\centering
\includegraphics[width=.45\textwidth]{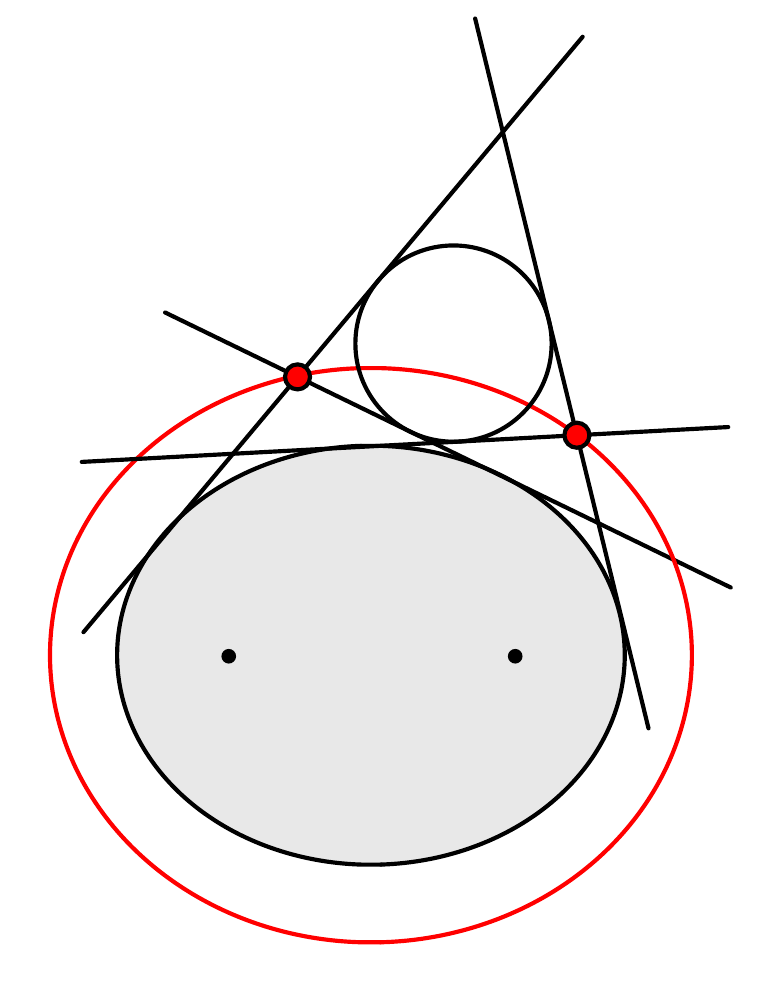}\qquad\quad
\includegraphics[width=.45\textwidth]{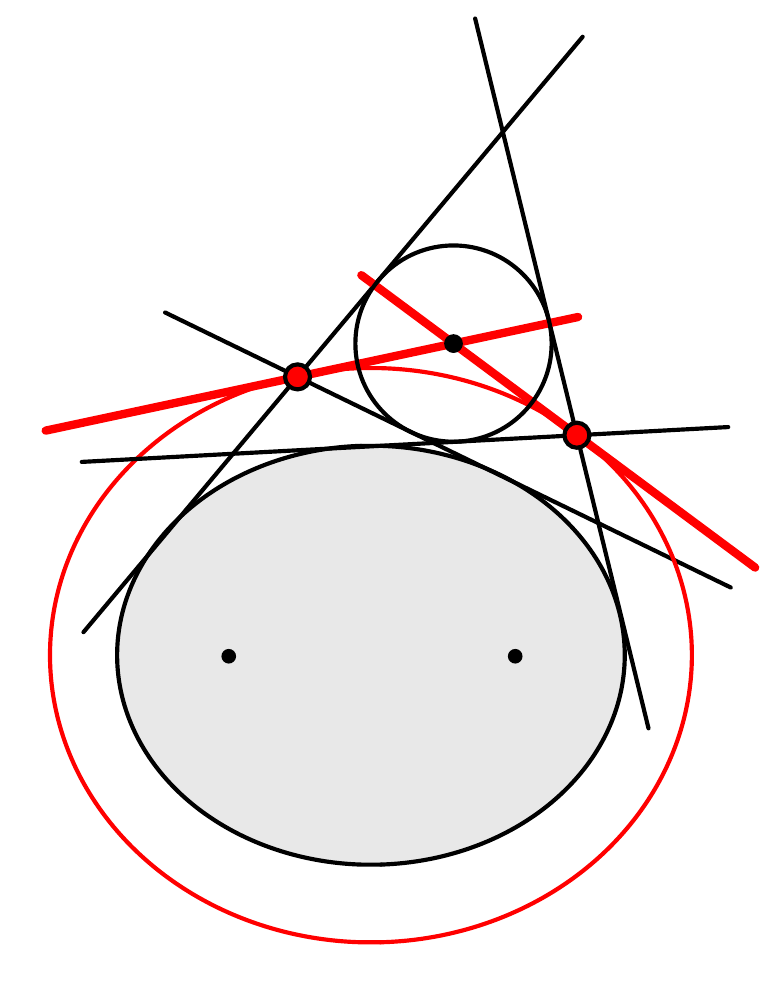}
\begin{picture}(0,0)
\put(-310,28){\footnotesize {$\mathcal{X}$}}
\put(-320,16){\footnotesize {$\mathcal{R}$}}
\put(-248,153){\footnotesize {$\mathcal{C}$}}
\put(-290,130){\footnotesize {${P}$}}
\put(-222,118){\footnotesize {${Q}$}}
\put(-330,65){\footnotesize {${a}$}}
\put(-330,110){\footnotesize {${c}$}}
\put(-310,140){\footnotesize {${b}$}}
\put(-254,195){\footnotesize {${d}$}}

\put(-125,28){\footnotesize {$\mathcal{X}$}}
\put(-135,16){\footnotesize {$\mathcal{R}$}}
\put(-63,153){\footnotesize {$\mathcal{C}$}}
\put(-105,130){\footnotesize {${P}$}}
\put(-37,118){\footnotesize {${Q}$}}
\put(-145,65){\footnotesize {${a}$}}
\put(-145,100){\footnotesize {${c}$}}
\put(-125,140){\footnotesize {${b}$}}
\put(-69,195){\footnotesize {${d}$}}
\put(-68,135){\footnotesize {${M}$}}
\put(-150,117){\footnotesize {${t_P}$}}
\put(-10,92){\footnotesize {${t_Q}$}}
\end{picture}
\caption{The Chasles' Quadrilateral  Theorem in its purest form (left) and its extended form (right), including tangents to $\mathcal{R}$ at $P$ and $Q$.}	
\label{fig:pure}
\end{figure}

There are two important aspects that lead to a natural  extension of this configuration
and the Theorem (which is frequently done in the literature).
Consider the image in Figure~\ref{fig:pure} on the right. 
There, the two tangents at the points $P$ and $Q$ to the conic $\mathcal{R}$ are added to the configuration.
In fact, the  tangent at $P$ (resp. $Q$) is an angle bisector of the lines $a$ and  $b$ (resp.  $c$ and $d$).
The tangents at  $P$ and $Q$  meet at a point $M$ that turns out to be the center of the circle.
The fact that the tangent at point $P$ to $\mathcal{R}$ is the angle bisector of the two lines $a$ and $b$ under the condition that $a$ and $b$ are tangent to an ellipse  $\mathcal{X}$ confocal to $\mathcal{R}$ 
is a well-known fact in the geometry of billiards on elliptic tables (see for instance \cite{DraRa11}, Proposition~2.4).

\begin{figure}[ht]
\centering
\includegraphics[width=.45\textwidth]{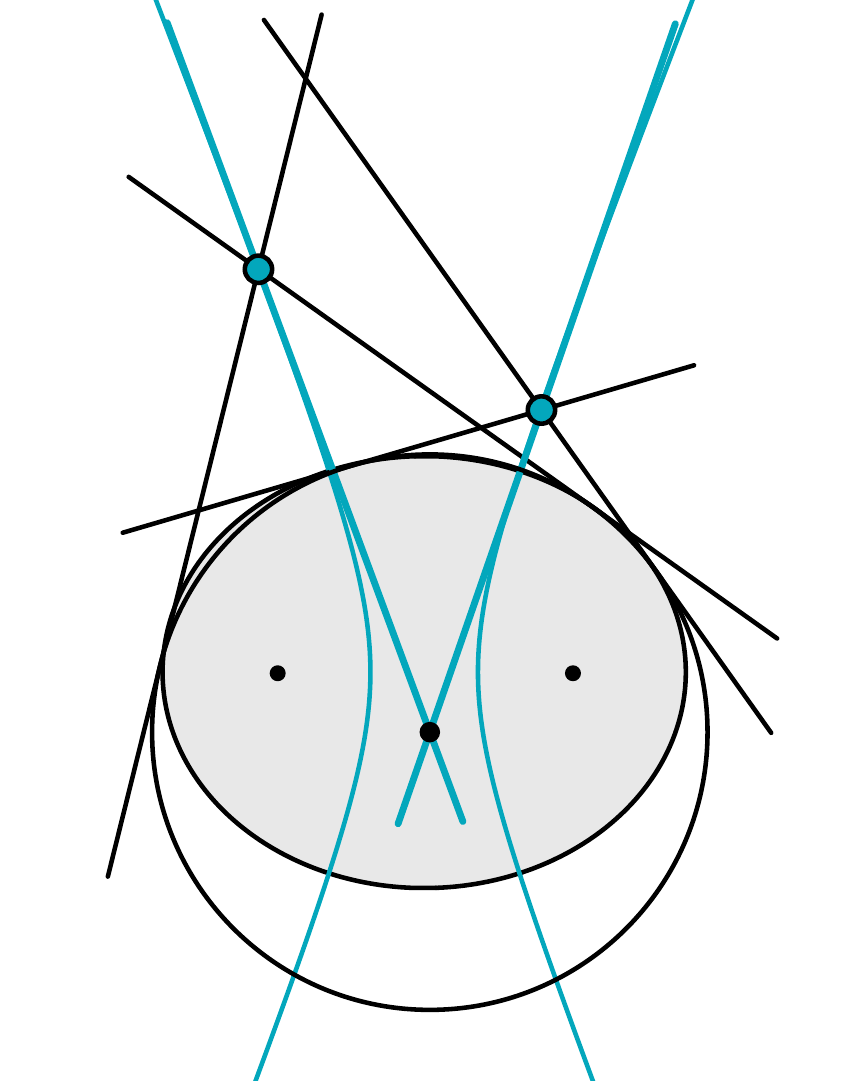}
\begin{picture}(0,0)
\put(-90,27){\footnotesize {$\mathcal{X}$}}
\put(-110,0){\footnotesize {$\mathcal{R}$}}
\put(-43,23){\footnotesize {$\mathcal{C}$}}
\put(-108,144){\footnotesize {${P}$}}
\put(-57,115){\footnotesize {${Q}$}}
\put(-143,45){\footnotesize {${a}$}}
\put(-138,102){\footnotesize {${c}$}}
\put(-130,160){\footnotesize {${b}$}}
\put(-117,185){\footnotesize {${d}$}}

\end{picture}
\caption{The Chasles' Quadrilateral  Theorem in a generic situation with a hyperbola as $\mathcal{R}$.}	
\label{fig:pure_H}
\end{figure}

However, there is an important conceptual difference between the angle bisector of the lines $a$ and $b$ and the tangent to $\mathcal{R}$ at $P$.
While there is just {\it one} tangent  at $P$, there are ${\it two}$ angle bisectors of $a$ and $b$: one is shown in Figure~\ref{fig:pure},
but the line through $P$ orthogonal to the first angle bisector is also an angle bisector.
This ambiguity is one of the sources of confusion and easily  leads to faulty extensions of Theorem~\ref{CGTpure}.
The ambiguity of angle bisectors is complemented by an ambiguity in the choice of a  conic through $P$ confocal to $\mathcal{X}$. As a matter of fact, there are two such conics: an ellipse (shown in Figure \ref{fig:pure}) and a hyperbola (whose tangent is the other angle bisector). See the Figure \ref{fig:pure_H} for an instance where the conic $\mathcal{R}$ is a hyperbola rather than an ellipse.

 This ambiguity in the choice of conics is the second source of confusion, as we will see. 
 The  situations shown in Figure \ref{fig:pure} and Figure \ref{fig:pure_H} usually do not coexist for  generic choices of $P$ and $Q$. However, if $P$ and $Q$ are symmetric with respect to the perpendicular bisector of the foci of $\mathcal{X}$ (in our picture, the vertical symmetry axis), both situations may be present simultaneously. That is, both a confocal ellipse and a confocal hyperbola can be drawn through $P$ and $Q$. Corresponding situations are shown in Figure~\ref{fig:flaw}.

Let us contrast our Theorem 1 with a version that can be found in the classical and extremely important book {\it Principes de géométrie analytique} by Gaston Darboux from 1917 \cite{Dar17}: 
\medskip

\begin{center}\includegraphics[width=.95\textwidth]{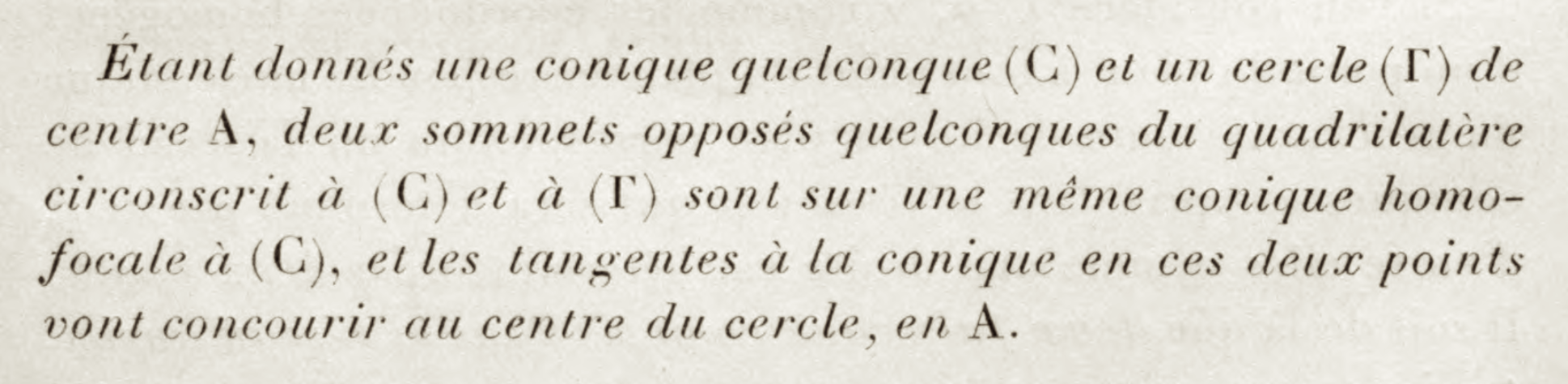}\end{center}
\medskip

\noindent
The first part of this statement describes one part of the above equivalence. In translation (and with the names of the objects adapted to those used in this article) this reads:

\noindent
\begin{quote}{\it Let $\mathcal{X}$ be any conic and $\mathcal{C}$ be any circle with centre $M$. Then any two opposite points of
the quadrilateral circumscribed at both   $\mathcal{X}$  and  $\mathcal{C}$ lie on a conic  $\mathcal{R}$ confocal to $\mathcal{X}$, {\color{red} \ul{and the tangents at these points will meet in the centre of the circle.}}
}\end{quote}

The drawing that can be often found along with this statement is given in  Figure~\ref{fig:pure} on the right. The two red lines in that figure are the tangents mentioned in this statement.
 The core part of this statement is correct; however the extension {\it ``$\ldots$and the tangents$\ldots$''} underlined and marked in red suffers from a significant problem. 

The problem arises in the following way. The first part of this statement describes a configuration consisting of a conic and a circle that are simultaneously tangent to four given lines. Implicitly,  this formulation suggests that if the conic and the four lines are given, the choice of the circle is unique. This is true in general. However, it becomes false in specific situations.  There are situations in which there is a second circle that is simultaneously tangent to all four lines, when the configuration is symmetric with respect to the perpendicular bisector of the foci of the conic.
In this case, neither the conic $\mathcal{R}$ nor the circle $\mathcal{C}$ are uniquely determined. The situation is clarified in Figure~\ref{fig:flaw}. The first picture in this series shows a situation in which the four lines and the conic $\mathcal{X}$ are symmetric with respect to the vertical axis. In this case, there are two conics  $\mathcal{R}$ and  $\mathcal{R'}$ confocal to  $\mathcal{X}$ through $P$ and $Q$, and also two circles  $\mathcal{C}$  and  $\mathcal{C'}$ tangent to the four black lines. Depending on the chosen combination, they may be consistent with the tangent construction or not. The two middle pictures show the two consistent choices while the last picture shows an inconsistent choice in which the intersection of the tangents is not the center of the circle. In other words: The rightmost picture of Figure~\ref{fig:flaw} is a counterexample to Darboux's statement.

\begin{figure}[ht]
\centering
\includegraphics[width=.24\textwidth]{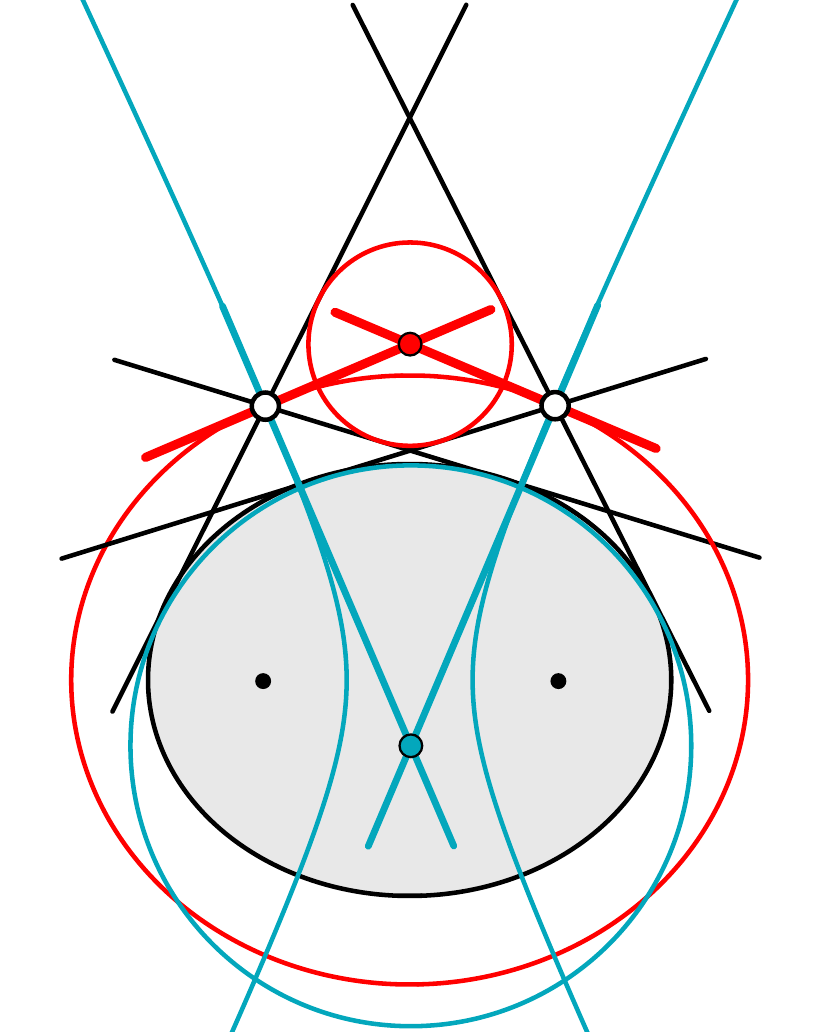}
\includegraphics[width=.24\textwidth]{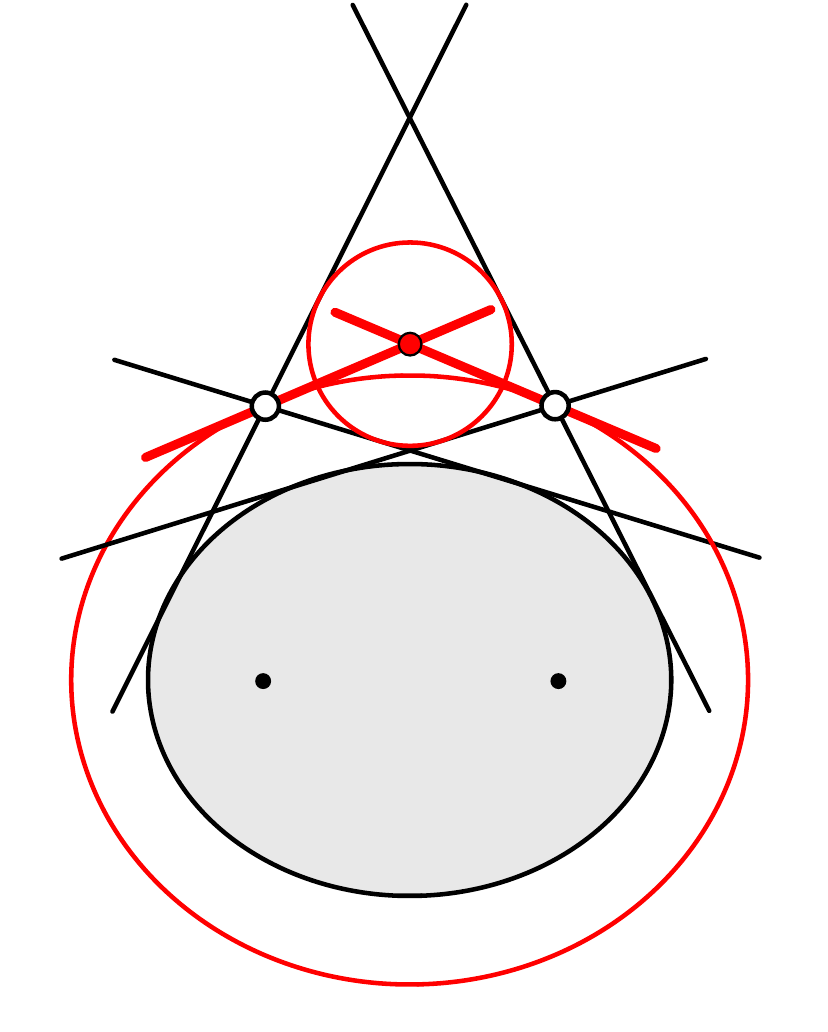} 
\includegraphics[width=.24\textwidth]{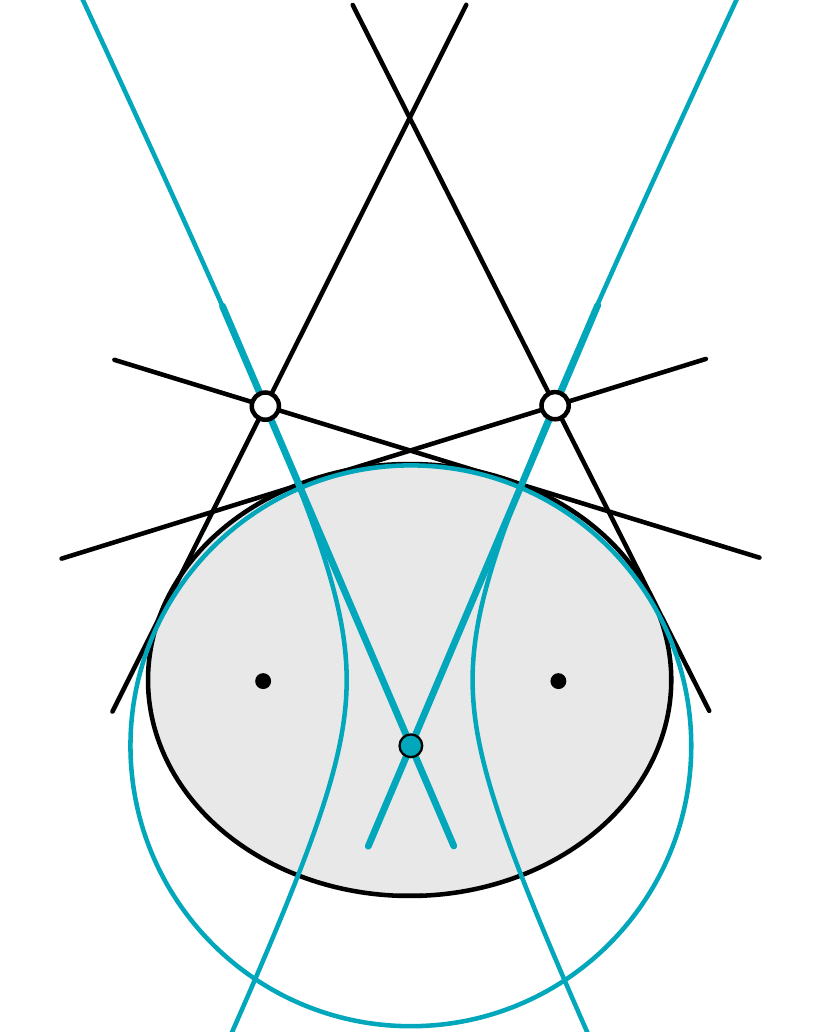} 
\includegraphics[width=.24\textwidth]{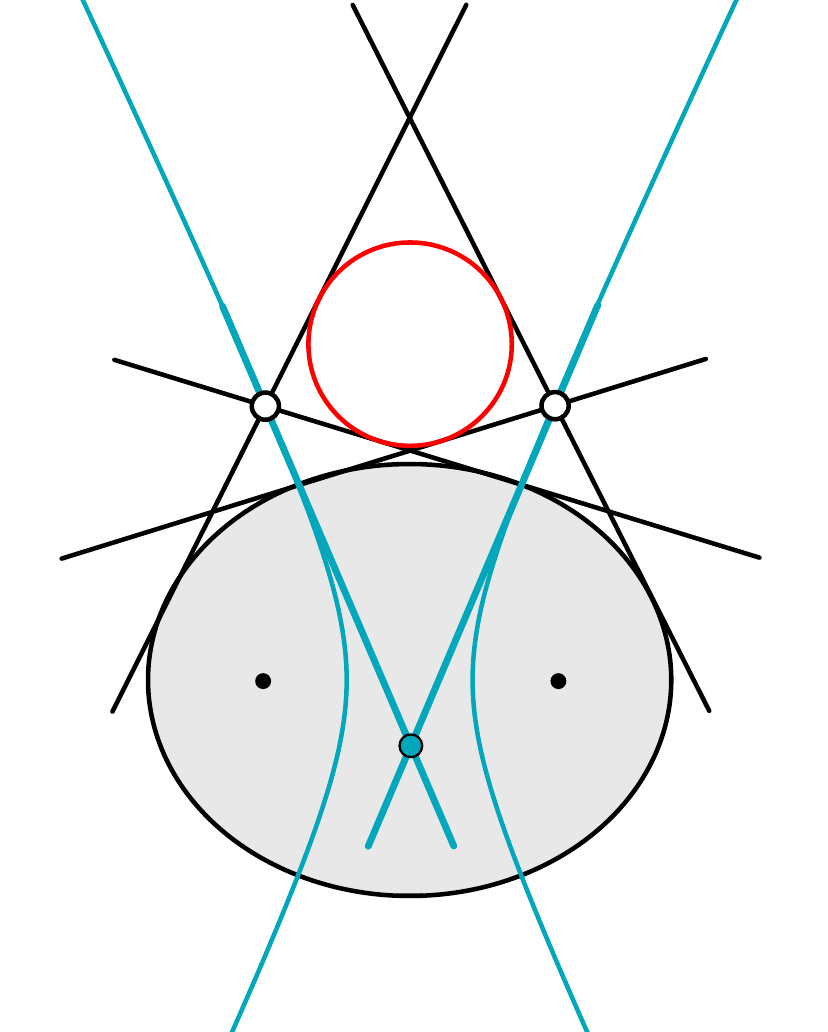} 
\begin{picture}(0,0)
\end{picture}
\caption{
The symmetric situation with the two possible choices for the conic $\mathcal{R}$ and the circle $\mathcal{C}$ (leftmost).
The following images show two consistent choices  (middle) and one inconsistent choice (rightmost).
}	
\label{fig:flaw}
\end{figure}

One might be tempted to assume that the flawed formulation is a singular appearance in the literature, but unfortunately this is not the case. Similar situations arise both in older and in more modern sources. Chasles wrote two articles about a cycle of theorems  
around closely related matters,
one in 1843 \cite{Cha1843} and one in 1860 \cite{Cha1860}. There, he first develops a basis of four general theorems and then derives as many as 34 corollaries from them
(none of them with an explicit proof).  Several of these results suffer from the same problems created by ambiguous intermediate results of the intermediate geometric constructions that may lead to inconsistent instances.
We here  focus on Chasles' Corollary~20~\cite{Cha1860} (which is the original form of what we call  Chasles' Quadrilateral Theorem). It is the direct counterpart of Darboux's statement,  which is frequently used in modern literature on elliptical billiards, integrable systems, and discrete differential geometry:

\medskip

\begin{center}\includegraphics[width=.95\textwidth]{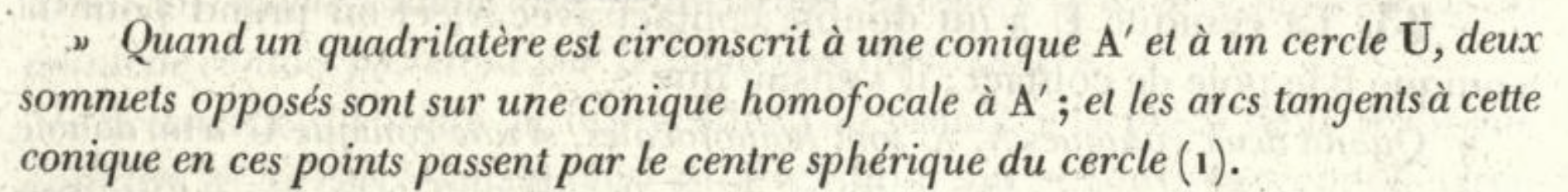}\end{center}
\medskip

\noindent
Literally translated, we get a paraphrase of Darboux's statement:
\noindent
\begin{quote}{\it When a quadrilateral is circumscribed by a conic $\mathcal{X}$  and a circle  $\mathcal{C}$, two opposite vertices are on a conic
confocal to  $\mathcal{R}$; {\color{red}\ul{and the arcs tangent to this conic at these points pass through the spherical center of the circle}.}
}\end{quote}

Before we analyse the deeper reasons why these theorems lead to such ambiguities, we quote two relatively modern sources that suffer from similar problems. For reasons that will become apparent later, we will call the statements of Chasles and Darboux {\it dual} formulations. Similar problems may also arise in the {\it primal} world where the roles of points and lines are interchanged. An instance can be found in an article by  Akopyan and Bobenko  \cite{AkBo18} in which  their Lemma~2.3 states the following:

\begin{quote}{\it Let $a, b, c, d$ be four points on a conic $\alpha$ and the lines $(ab)$ and $(cd)$
touch some other conic $\beta$. Then lines $(bd)$ and $(ac)$ touch some conic $\gamma$ from the
pencil generated by the conics $\alpha$ and $\beta$. {\color{red}\ul{Moreover, the tangent points of $\beta$ with $(ab)$
and $(cd)$ and the tangent points of $\gamma$ with $(bd)$ and $(ac)$ are collinear.}}
}\end{quote}

Again the problem lies in the `moreover'-part. Figure~\ref{fig:BobAk} on the left gives the instance that was intended by the authors (this situation will arise in the generic case). The small black dots indicate that all conics are taken from the same pencil. The bigger black dots are the ones about which the collinearity is stated. On the right, the figure shows an instance in which all hypotheses of the statement hold but the collinearity claimed in the red part of the statement fails.
Instead, the  tangent points lie on another conic (shown thick and blue) that is also tangent to the lines at the touching points. The reason for the specific coloring of the elements will become transparent later.
\begin{figure}[ht]
\centering
\includegraphics[width=.45\textwidth]{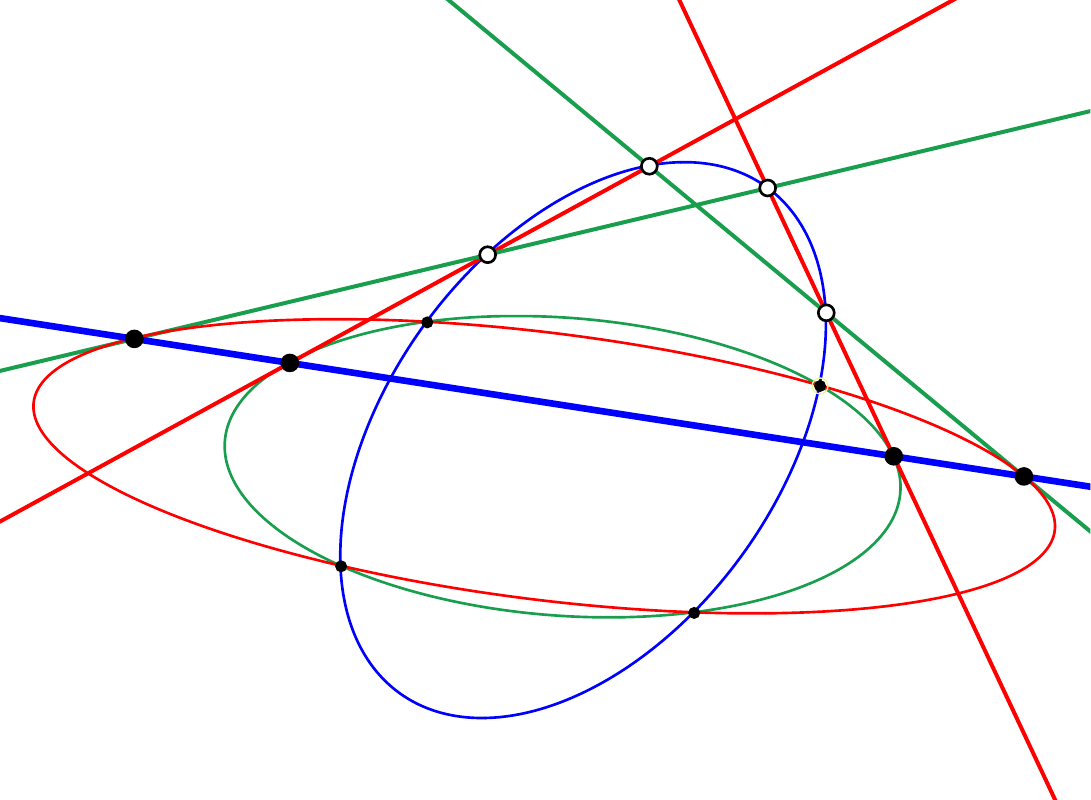}\hfill
\includegraphics[width=.45\textwidth]{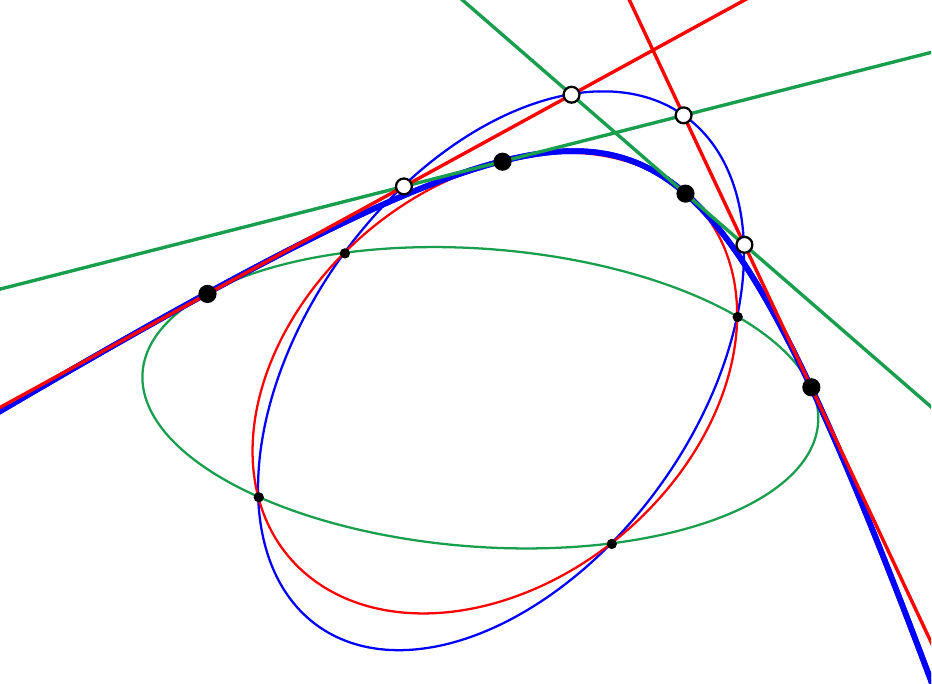}
\begin{picture}(0,0)
\put(-265,10){\footnotesize {$\alpha$}}
\put(-75,5){\footnotesize {$\alpha$}}
\put(-282,80){\footnotesize {$a$}}
\put(-97,87){\footnotesize {$a$}}
\put(-254,95){\footnotesize {$b$}}
\put(-65,103){\footnotesize {$b$}}
\put(-235,90){\footnotesize {$c$}}
\put(-44,97){\footnotesize {$c$}}
\put(-225,70){\footnotesize {$d$}}
\put(-33,75){\footnotesize {$d$}}
\put(-85,25){\footnotesize {$\beta$}}
\put(-275,20){\footnotesize {$\beta$}}
\put(-325,35){\footnotesize {$\gamma$}}
\put(-95,15){\footnotesize {$\gamma$}}

\end{picture}
\caption{The intended geometric situation in \cite{AkBo18} Lemma 2.5 (left) and an explicit counterexample to the statement (right).}	
\label{fig:BobAk}
\end{figure}

In their article,  Akopyan and Bobenko  mention that their Lemma is a paraphrased version of 
statements in \cite{Ber87} and \cite{AlkZas07}. The statements in those two sources are correct; however, the glitch slipped into the statement during paraphrasing it. It should be mentioned that from their Lemma 2.3 they derive a correct Version of the CQT (their Theorem 2.5).

We also mention that  the articles \cite{StaB} by Stachel (Theorem 3.5)  and 
\cite{Iz19} by Izmestiev (Theorem 2.12) contain
 similar flawed versions (this time again in the dual scenario). Since the situation is structurally similar to the ones we have shown so far, we omit a detailed analysis here.

\medskip

There is a good reason for the flaws in the statement to happen. If one considers the space of possible instances of the hypotheses of the statements, the possibility for the ambiguity to happen arises only on a (hard-to-detect) low dimensional submanifold of the configuration space. Thus, every generic drawing will be in a region that does not exhibit the ambiguity. As a matter of fact, the flawed instances were found by experimenting with the dynamic geometry software Cinderella \cite{RiKo99}, in which it is relatively easy to explore configuration spaces of incidence theorems.

\medskip


We now present a version of the CQT that is logically correct and involves the statement about the tangents:

\begin{theorem} {\bf (CQT with tangents)}:\label{CGTtangent}
Let $\mathcal{X}$ be an ellipse in the Euclidean plane and let $\mathcal{R}$ be a conic confocal to  $\mathcal{X}$. Let $P$ and $Q$ be two distinct points on the conic  $\mathcal{R}$
and let $t_P$ and $t_Q$ be the two tangents to that conic through $P$ and $Q$, respectively. Then the two tangents $t_P$ and $t_Q$ intersect in a point $M$. The point $M$ is the center of a circle $\mathcal{C}$ touching the four tangents $a,b,c,d$ from $P$ and $Q$ to $\mathcal{X}$.

\smallskip
Conversely, consider four tangents $a,b,c,d$ that are simultaneously tangent to $\mathcal{X}$ and to a circle $\mathcal{C}$.
Let $M$ be the center of the circle and let  $P=a\meet b$ and $Q=c\meet d$. Then there is a conic $\mathcal{R}$ confocal to $\mathcal{X}$ 
and tangent to the lines $M\join P$ and  $M\join Q$ at
$P$ and $Q$.
\end{theorem}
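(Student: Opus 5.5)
Both directions rest on one classical fact, the \emph{reflection property} of confocal conics already quoted above (\cite{DraRa11}, Prop.~2.4). If $\mathcal{X}$ and $\mathcal{R}$ are confocal and $P\in\mathcal{R}$ lies outside $\mathcal{X}$, then the tangent $t_P$ to $\mathcal{R}$ at $P$ is one of the two angle bisectors of the pair of tangents $a,b$ from $P$ to $\mathcal{X}$. Which bisector it is depends on whether $\mathcal{R}$ is the confocal ellipse or the confocal hyperbola through $P$. I would take this fact as given, or recheck it quickly: the two tangents from $P$ to $\mathcal{X}$ make equal angles with the lines $PF_1$ and $PF_2$ to the foci (Poncelet's theorem). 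The tangent of the confocal ellipse through $P$ bisects the exterior angle of $F_1PF_2$, and that of the confocal hyperbola bisects the interior angle, so both are bisectors of $(a,b)$. The point of the proof is that we never \emph{choose} a bisector. The confocal conic $\mathcal{R}$ is fixed first, and that removes the ambiguity responsible for the counterexamples in Figure~\ref{fig:flaw}.

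For the first part, take $P,Q\in\mathcal{R}$ with their tangents $t_P,t_Q$. These meet in a point $M$, because two distinct points of a nondegenerate conic have distinct, non-parallel tangents, except when $P,Q$ are antipodal. That case must either be excluded or handled projectively, with $M$ at infinity and the ``circle'' degenerate. By the reflection property, $t_P$ bisects $(a,b)$, so every point of $t_P$, and $M$ in particular, is equidistant from $a$ and $b$. Likewise $M$ is equidistant from $c$ and $d$. So far this only gives $d(M,a)=d(M,b)$ and $d(M,c)=d(M,d)$. The circle requires the cross equality $d(M,a)=d(M,c)$, and this is the main obstacle. I would try to get it in one of two ways. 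The first is to invoke Theorem~\ref{CGTpure}, direction (ii)$\Rightarrow$(i): it gives a circle $\mathcal{C}$ tangent to $a,b,c,d$. Its center lies on a bisector of $(a,b)$ and on a bisector of $(c,d)$, and we then have to show these are $t_P$ and $t_Q$ and not the orthogonal ones. The second way avoids the uniqueness question by computing directly. Write the tangent lines of $\mathcal{X}$ in elliptic coordinates (or tangential coordinates $\alpha x+\beta y=1$ with $a^2\alpha^2+b^2\beta^2=1$) and show that the signed distance from $M$ to the tangents from $P$ equals that to the tangents from $Q$. A clean route uses the support function: for $\mathcal{X}$ the tangent with unit normal $n$ has support value $h(n)=\sqrt{a^2n_1^2+b^2n_2^2}$. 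The distance from $M$ to that tangent is $h(n)-\langle M,n\rangle$, and the condition becomes an identity on $\mathcal{R}$, which I expect to reduce to the defining equation of the confocal family.

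For the converse, start from $a,b,c,d$ tangent to $\mathcal{X}$ and to a circle $\mathcal{C}$ with center $M$. By Theorem~\ref{CGTpure}, $P$ and $Q$ lie on some conic confocal to $\mathcal{X}$. There are two candidates through $P$, an ellipse $\mathcal{R}_1$ and a hyperbola $\mathcal{R}_2$, whose tangents at $P$ are the two bisectors of $(a,b)$. Since $M$ is equidistant from $a$ and $b$, the line $M\join P$ is one of these bisectors, so exactly one confocal conic $\mathcal{R}$ through $P$ is tangent to $M\join P$ at $P$. The remaining task is to show that this particular $\mathcal{R}$ passes through $Q$ and is tangent to $M\join Q$ there. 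For this, consider the conic $\mathcal{R}'$ through $Q$ tangent to $M\join Q$, chosen by the same rule, and apply the first part to the pair $(P,\mathcal{R})$: the tangents from $P$ and from a point $Q'$ of $\mathcal{R}$ generate a circle centered on $t_P$. A continuity/counting argument, or the direct computation from the first part, should then show that the circles tangent to $a,b$ with center on $t_P$ and tangent to $c,d$ force $Q$ onto $\mathcal{R}$. I expect this step to need the same distance identity, so that identity is the key lemma.
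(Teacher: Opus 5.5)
\textbf{There is a genuine gap.} Everything up to $d(M,a)=d(M,b)$ and $d(M,c)=d(M,d)$ is correct. So is your remark that antipodal $P,Q$ force a projective reading of ``$t_P$ and $t_Q$ intersect''. But those equalities are the easy, classical half. The cross equality $d(M,a)=d(M,c)$, i.e.\ that the circle centred at $M$ touching $a$ also touches $c$ and $d$, \emph{is} the theorem, and your proposal does not prove it.

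\emph{Your first route cannot supply it.} In this paper Theorem~\ref{CGTpure} is derived \emph{from} the statement you are proving. Even granting it, it only gives \emph{some} circle tangent to $a,b,c,d$, whose centre is one of the four points where a bisector of $(a,b)$ meets a bisector of $(c,d)$. Deciding that this centre is $t_P\meet t_Q$ is exactly the ambiguity of Figure~\ref{fig:flaw}: there two such circles exist, and only the one compatible with the chosen $\mathcal{R}$ is centred at $t_P\meet t_Q$. You give no argument for this.

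\emph{Your second route is only announced.} The converse has the same hole. Once $\mathcal{R}$ is selected at $P$ via the bisector $M\join P$, the whole content is that this same conic passes through $Q$ with tangent $M\join Q$. ``A continuity/counting argument \ldots should then show'' does not establish that. For comparison, the paper gets the missing incidence from Cayley--Bacharach in the dualized projective picture. It gets the converse for free from the symmetry $\mathcal{R}\leftrightarrow\mathcal{C}$, $\{P,Q\}\leftrightarrow\{\mathsf{I},\mathsf{J}\}$.

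\textbf{How your second route can be completed.} It does close, and it would give a more elementary, if coordinate-bound, proof than the paper's. Put $\mathcal{X}\colon x^2/A+y^2/B=1$, $\mathcal{R}\colon x^2/(A+\lambda)+y^2/(B+\lambda)=1$ and $F_{\mathcal{R}}(M)=x_M^2/(A+\lambda)+y_M^2/(B+\lambda)-1$.
\begin{itemize}
\item A line $\langle x,n\rangle=s$ with $|n|=1$ touches $\mathcal{X}$ if and only if $s^2=An_1^2+Bn_2^2$. It touches $\mathcal{R}$ if and only if $s^2=An_1^2+Bn_2^2+\lambda$.
\item For $\ell$ through $P\in\mathcal{R}$ we have $s=\langle P,n\rangle$. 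The quadratic form $n\mapsto(A+\lambda)n_1^2+(B+\lambda)n_2^2-\langle P,n\rangle^2$ has determinant proportional to $F_{\mathcal{R}}(P)=0$, and it vanishes on the normal of $t_P$. So it equals $\rho_P^2\sin^2\psi$, where $\psi=\angle(\ell,t_P)$ and $\rho_P$ is the semi-diameter of $\mathcal{R}$ parallel to $t_P$ (the trace is $\rho_P^2$ by Apollonius).
\item Hence $\ell$ touches $\mathcal{X}$ if and only if $\rho_P^2\sin^2\psi=\lambda$.
\item The tangent-length formula gives $|MP|^2=\rho_P^2F_{\mathcal{R}}(M)$ for $M\in t_P$. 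Therefore $d(M,\ell)^2=|MP|^2\sin^2\psi=\lambda F_{\mathcal{R}}(M)$.
\end{itemize}
This value does not depend on $P$, which is exactly the cross equality.

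\textbf{How the converse can then be completed.} Take $\mathcal{R}$ through $P$ tangent to $M\join P$, and let $Q^*$ be the contact point of the second tangent from $M$ to $\mathcal{R}$. By the identity, $\mathcal{C}$ touches the tangents from $Q^*$ to $\mathcal{X}$. Since $a,b,c,d$ are all the common tangents of $\mathcal{C}$ and $\mathcal{X}$, in the nondegenerate case $Q^*$ is a vertex of the quadrilateral. What must be excluded is a vertex adjacent to $P$, say one on $a$.
\begin{itemize}
\item Then $M$ would be the pole of $a$ with respect to $\mathcal{R}$.
\item But the poles of a tangent of $\mathcal{X}$ with respect to the confocal family lie on its normal at its contact point with $\mathcal{X}$.
\item So $\mathcal{C}$ would touch $a$ exactly where $\mathcal{X}$ does. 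Then $a$ would be a double common tangent, which is impossible alongside $b,c,d$.
\end{itemize}
Hence $Q^*=Q$.
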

\smallskip

Theorem \ref{CGTtangent} will serve as our main reference formulation. Most subsequent versions will be derived from or are equivalent to this statement.
We will present several proofs for this statement and (more general) variants of this theorem in the subsequent sections.

A few remarks are appropriate here. We intentionally split the {\it equivalence} statement in two parts, explicitly showing  how to construct a corresponding circle $\mathcal{C}$ from  $\mathcal{R}$ and vice versa. Thus  $\mathcal{C}$ is constructed as {\it the} circle that is compatible with the choice of  $\mathcal{R}$.
Secondly, we might explicitly have to take care of  limit situations. In our statement, the conic $\mathcal{R}$ could also be a hyperbola intersecting the conic $\mathcal{X}$. Thus it may happen that, for instance, point $P$  lies at the intersection of 
 $\mathcal{R}$ and  $\mathcal{X}$. An examination of the limiting case shows that in this situation the circle should 
 be tangent to both identical lines $a$ and $b$ and at the same time must contain the point $P$. 
 The situation may even be more drastic when $\mathcal{R}$ is a hyperbola. The point $P$ (or $Q$) may be chosen inside the ellipse $\mathcal{X}$. In that case the tangents $a$ and $b$ will turn out to be complex. Nevertheless they still will touch the circle, if we consider {\it touching} as a purely algebraic condition. 

 \smallskip

The last considerations show that if we want to fully understand Chasles' Quadrilateral  Theorem we have to take complex objects into account as well. This is one of the reasons why versions of Theorem~\ref{CGTpure} (or~\ref{CGTtangent}) are usually stated for ellipses only.  We also attempt to cover the general complexified case.
  \medskip

There is another extension of Theorem~\ref{CGTpure} that immediately comes into play when one considers the fact that the lines $a,b,c,d$ play completely symmetric roles. Considering the intersection figure of these six lines we get three pairs of points that could each play the role of $P$ and $Q$. So let $S=a\meet c$, $T=b\meet d$ and let
$U=b\meet c$, $V=a\meet d$. We get (as a direct consequence of Theorem~\ref{CGTpure}):

\begin{theorem} {\bf (CQT with three conics)}: \label{CGTfour}
Let $\mathcal{X}$ be an ellipse in the Euclidean plane. Let $a,b,c,d$ be four distinct tangents to $\mathcal{X}$
and let $P,Q; S,T; U,V$  be the three pairs of intersections as described above. Then the following statements are equivalent.
\begin{itemize}
\item[(i)] $a,b,c,d$ are tangent to a circle $\mathcal{C}$
\item[(ii)] $P,Q$ lie on a conic $\mathcal{R}$ confocal to $\mathcal{X}$.
\item[(iii)] $S,T$ lie on a conic $\mathcal{G}$ confocal to $\mathcal{X}$.
\item[(iv)] $U,V$ lie on a conic $\mathcal{B}$ confocal to $\mathcal{X}$.
\end{itemize}
\end{theorem}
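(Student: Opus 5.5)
The plan is to derive Theorem~\ref{CGTfour} directly from Theorem~\ref{CGTpure}, using the fact that statement (i) is symmetric in the four lines while (ii), (iii), (iv) each single out one way of splitting $\{a,b,c,d\}$ into two pairs. Theorem~\ref{CGTpure} is formulated for an arbitrary labelling of four distinct tangents to $\mathcal{X}$. So I will apply it three times, to three relabellings of the same four lines.

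First, applying Theorem~\ref{CGTpure} to the ordered quadruple $(a,b,c,d)$ gives (i)$\Leftrightarrow$(ii) immediately, since then $P=a\meet b$ and $Q=c\meet d$.

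Next, I apply it to the relabelled quadruple $(a',b',c',d')=(a,c,b,d)$. These are still four distinct tangents to $\mathcal{X}$. Hypothesis (i) for them is literally the same condition, because tangency of a set of lines to a circle does not depend on ordering. The points of Theorem~\ref{CGTpure}(ii) become $a'\meet b'=a\meet c=S$ and $c'\meet d'=b\meet d=T$. This yields (i)$\Leftrightarrow$(iii).

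Similarly, the relabelling $(b,c,a,d)$ gives $b\meet c=U$ and $a\meet d=V$, hence (i)$\Leftrightarrow$(iv). Chaining the three equivalences through (i) then proves that all four statements are equivalent.

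The only point requiring care is that the hypotheses of Theorem~\ref{CGTpure} remain valid under relabelling. Distinctness of the lines is preserved. The intersection points are well defined unless two of the lines are parallel, in which case the corresponding point lies at infinity. I would handle this in the same way Theorem~\ref{CGTpure} implicitly does, or, since the lines are tangents to an ellipse, simply note that the equivalence is invoked verbatim in whatever generality Theorem~\ref{CGTpure} holds.

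I expect no real obstacle, as the argument is a purely formal symmetry. The one thing to make explicit is that the conics $\mathcal{R},\mathcal{G},\mathcal{B}$ are not claimed to coincide, and nothing is claimed about their tangents at these points. This avoids exactly the ambiguity discussed above.
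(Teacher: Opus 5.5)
Your proposal is correct and is essentially the paper's argument: the paper derives Theorem~\ref{CGTfour} directly from Theorem~\ref{CGTpure} via the interchangeability of the three pairings of $a,b,c,d$, which your relabellings $(a,b,c,d)$, $(a,c,b,d)$, $(b,c,a,d)$, chained through the order-independent condition (i), make explicit. The paper does not discuss your parallel-tangent caveat, and deferring it to Theorem~\ref{CGTpure} is appropriate, since the reduction simply inherits whatever exceptions that theorem has in its Euclidean reading; for example, if $P,Q$ are antipodal on a confocal ellipse, then $a\parallel c$ and $b\parallel d$, and in general no circle touches all four lines.
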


 \begin{figure}[ht]
\centering
\includegraphics[width=.65\textwidth]{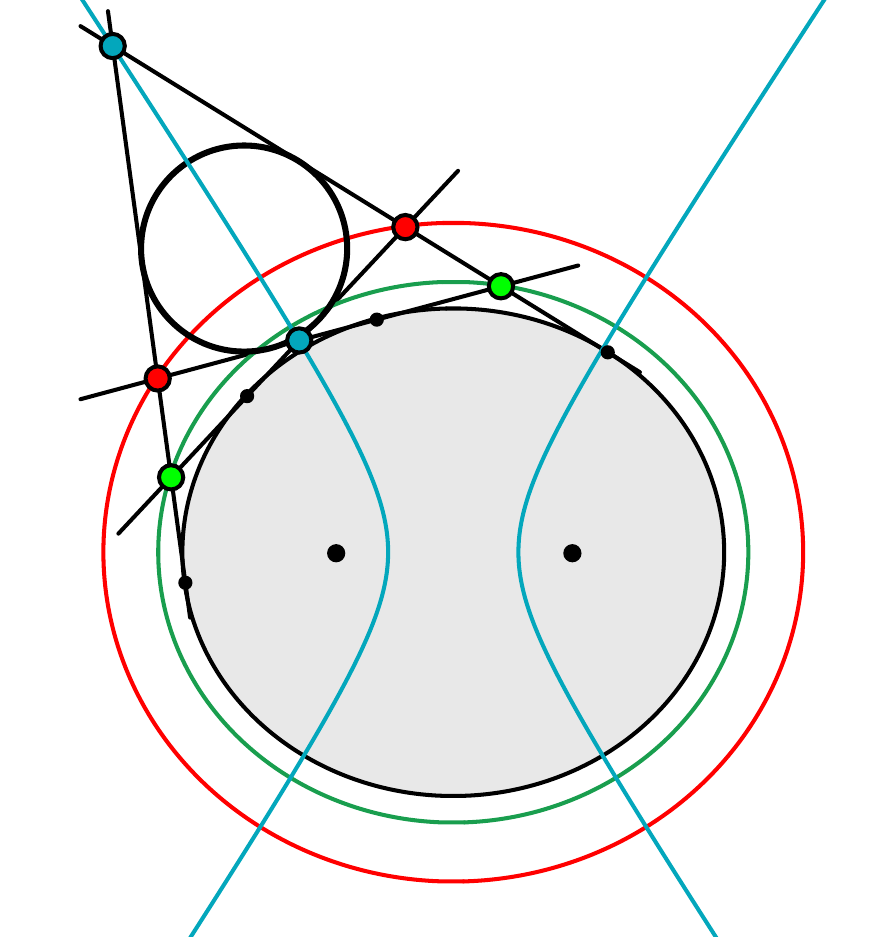}
\begin{picture}(0,0)
\put(-198,144){\footnotesize {$P$}}
\put(-127,188){\footnotesize {$Q$}}
\put(-194,117){\footnotesize {$S$}}
\put(-103,173){\footnotesize {$T$}}
\put(-194,229){\footnotesize {$U$}}
\put(-155,161){\footnotesize {$V$}}
\put(-132,42){\footnotesize {$\mathcal{X}$}}
\put(-207,65){\footnotesize {$\mathcal{R}$}}
\put(-179,51){\footnotesize {$\mathcal{G}$}}
\put(-182,5){\footnotesize {$\mathcal{B}$}}
\end{picture}
\caption{The geometric situation in Theorem~\ref{CGTfour}.}	
\label{fig:Thm3Conics}
\end{figure}

This version (see Figure~\ref{fig:Thm3Conics}) is a direct consequence of Theorem~\ref{CGTpure} and follows from the interchangeability of the three pairs of points in the quadrilateral $a,b,c,d$ that are supposed to lie on the conic $\mathcal{R}$ in Theorem~\ref{CGTpure}.  Furthermore, Theorem~\ref{CGTpure} is a consequence of Theorem~\ref{CGTtangent}. Hence, we will aim for carefully proving  Theorem~\ref{CGTtangent}.

For completeness, we also provide a flawed version in the spirit of Theorem~\ref{CGTfour}. We aim for a statement that uses the conics  $\mathcal{R}$ and  $\mathcal{G}$ (parts (ii) and (iii) above)
and in the `moreover'-part claims that the tangents related to them meet.
Here is a statement that contains the same logical error as we have encountered before:

\begin{notheorem}
{\bf (A  flawed version of CQT):} Let $\mathcal{X}$ be an ellipse in the Euclidean plane and let $a,b,c,d$ be four distinct tangents to $\mathcal{X}$. Assume that the points $P=a\meet b$ and $Q=c\meet d$ lie on a  conic $\mathcal{R}$ confocal to $\mathcal{X}$. Then the points $S=a\meet c$ and $T=b\meet d$ lie on a conic $\mathcal{G}$  confocal to 
the other two \textcolor{red}{\ul{and all four tangents that can be formed at $P,Q,S,T$ to the corresponding conics meet in a point}.}
\end{notheorem}
\noindent

The first part of this statement is true (it is the equivalence of (ii) and (iii) in Theorem  \ref{CGTfour}). However, the  red underlined statement about the tangents is only true in the generic case, while there are counterexamples in special situations.
Again, the pitfall comes from the symmetric situation in which the choice of the conics becomes ambiguous.
Figure~\ref{fig:tangents} shows two situations. The picture on the left shows the situation that corresponds to the generic case. The picture on the right shows the symmetric situation in which the choice of conic through $P$ and $Q$ becomes ambiguous, which  leads to the counterexample.
\begin{figure}[ht]
\centering
\includegraphics[width=.45\textwidth]{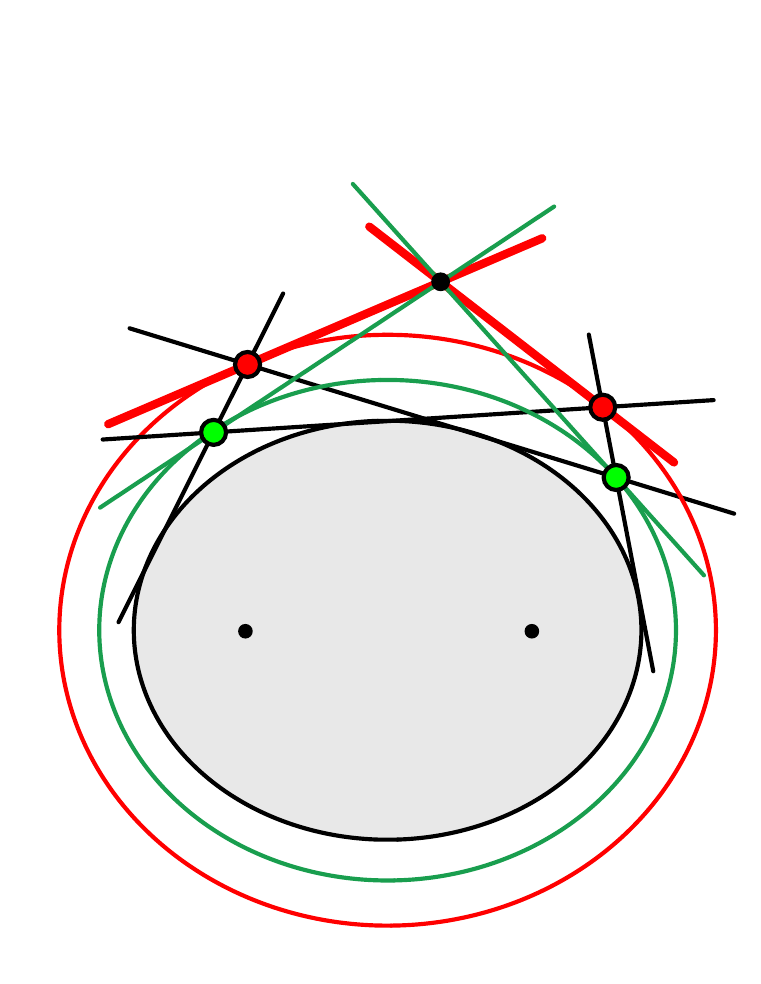}\;\; \quad
\includegraphics[width=.47\textwidth]{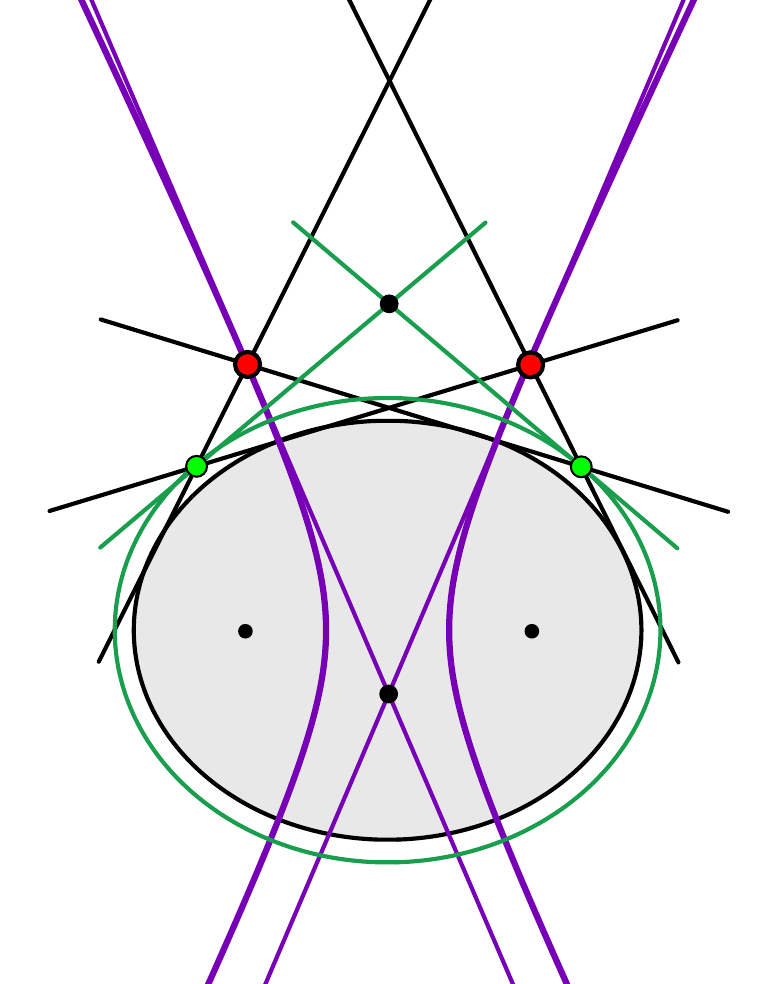}
\begin{picture}(0,0)
\put(-252,148){\footnotesize {$M$}}
\put(-295,132){\footnotesize {$P$}}
\put(-215,124){\footnotesize {$Q$}}
\put(-300,116){\footnotesize {$S$}}
\put(-212,104){\footnotesize {$T$}}
\put(-124,124){\footnotesize {$P$}}
\put(-48,126){\footnotesize {$Q$}}
\put(-130,113){\footnotesize {$S$}}
\put(-42,114){\footnotesize {$T$}}
\end{picture}
\caption{An instance of the configuration in Not–a–Theorem 1 in which the tangents meet (left) in contrast to an instance where they do not (right).}	
\label{fig:tangents}
\end{figure}

\begin{remark}
Indeed, a statement in the spirit of the last non-Theorem was exactly what we needed for our project related to Poncelet's Theorem and $(n_4)$ configurations \cite{BGRGT24a}. We needed a situation where we could ensure that four specific tangents to codependent conics meet in a point. \end{remark}

There are two ways to fix the statement. 
One is to specify conditions that force the selection of compatible conics; this is what we did in the appendix of \cite{BGRGT24a}, as indicated in the previous remark.
It turns out that in the particular case, where $\mathcal{X}$,
$\mathcal{R}$, $\mathcal{G}$ come from a Poncelet grid, the four tangents will indeed(!) always meet in a point.
The other way is to reconsider the construction order, like we did in Theorem~\ref{CGTtangent}.  Here is a version  that deliberately avoids the 
ambiguity by constructing the intersection of the four tangents in a early stage of the process. For the labelling again refer to Figure~\ref{fig:tangents} (left). A proof will be provided in Section \ref{sec:proofThm4}.

\begin{theorem} {\bf (CQT three conic version with tangents)}:\label{CGTtangent2}
Let $\mathcal{X}$ be an ellipse in the Euclidean plane and let 
$\mathcal{R}$,  $\mathcal{G}$, $P$ and $S$ be such that  $\mathcal{R}$ and  $\mathcal{G}$ 
are confocal to  $\mathcal{X}$, $P$ is on $\mathcal{R}$, $S$ is on $\mathcal{G}$, and the line $PS$ is tangent to $\mathcal{X}$. 
Let $M$ be the intersection of the tangents of $P$ to $\mathcal{R}$ and $S$ to $\mathcal{G}$.
 Let $Q$ and $T$ be the touching points of the other tangents from $M$ to $\mathcal{R}$ and $\mathcal{G}$, respectively. Then the lines $QT$, $PR$ and $SQ$ are all tangent to $\mathcal{X}$.
\end{theorem}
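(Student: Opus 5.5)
The plan is to reduce everything to Theorem~\ref{CGTtangent} together with the billiard (angle-bisector) property of confocal conics. I read the conclusion as saying that the four points $P,S,Q,T$ are the vertices of a quadrilateral of four common tangents $a,b,c,d$ of $\mathcal{X}$, with $a=PS$, $b\ni P,T$, $c\ni S,Q$ and $d\ni Q,T$. The lines to be shown tangent to $\mathcal{X}$ are therefore $QT=d$, $PT=b$ and $SQ=c$; I take the ``$PR$'' in the statement to be a misprint for $PT$.

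First I construct the circle. Let $b$ be the second tangent from $P$ to $\mathcal{X}$ and $c$ the second tangent from $S$. Because $\mathcal{R}$ is confocal to $\mathcal{X}$, the tangent $t_P$ to $\mathcal{R}$ at $P$ is one of the two angle bisectors of $a$ and $b$; likewise $t_S$ bisects $a$ and $c$. Every point on either bisector is equidistant from the two lines, so the choice of bisector does not matter here. Hence $M=t_P\meet t_S$ has equal distance to $a,b,c$, and the circle $\mathcal{C}$ with center $M$ and radius $\operatorname{dist}(M,a)$ touches $a,b,c$.

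Next I apply Theorem~\ref{CGTtangent} to $\mathcal{R}$ with the points $P$ and $Q$. The tangents $t_P$ and $t_Q$ meet in $M$, and $M$ is the center of a circle touching $a$, $b$ and the two tangents from $Q$ to $\mathcal{X}$. This circle has center $M$ and touches $a$, so it is $\mathcal{C}$. Thus both tangents from $Q$ to $\mathcal{X}$ are common tangents of $\mathcal{X}$ and $\mathcal{C}$. The same argument applied to $\mathcal{G}$ with $S$ and $T$ shows that both tangents from $T$ to $\mathcal{X}$ are common tangents of $\mathcal{X}$ and $\mathcal{C}$. A conic and a circle have (counted algebraically) four common tangents. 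Three of them are $a,b,c$; call the fourth $d$. So $Q$ and $T$ are each an intersection point of two lines among $\{a,b,c,d\}$.

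The main obstacle is the combinatorial identification: showing that $Q=c\meet d$ and $T=b\meet d$, rather than some other vertex of the complete quadrilateral. This is exactly where the ambiguities of the paper's earlier examples live. $Q\neq P$ holds by construction, and $Q\neq S$ etc.\ should follow from $Q$ being on $\mathcal{R}$ while lying on the correct bisector through $M$. My first attempt is to use Theorem~\ref{CGTfour}: for the quadrilateral $a,b,c,d$, the opposite pair of $P$ is $c\meet d$, and it lies on a confocal conic whose tangent there passes through $M$ (second part of Theorem~\ref{CGTtangent} applied with $P=a\meet b$, $Q'=c\meet d$). That confocal conic through $P$ tangent to $MP$ is forced to be $\mathcal{R}$, since the tangent line at $P$ distinguishes the confocal ellipse from the confocal hyperbola through $P$. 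So $c\meet d$ lies on $\mathcal{R}$, and its tangent to $\mathcal{R}$ passes through $M$. Hence $c\meet d$ is a touching point of a tangent from $M$ to $\mathcal{R}$ other than $P$, i.e.\ equals $Q$. The same argument with $S=a\meet c$ and $\mathcal{G}$ gives $T=b\meet d$. This yields tangency of $SQ=c$, $PT=b$ and $QT=d$. Degenerate cases, such as $t_P=t_S$, coinciding tangents, or complex tangents, I would handle by a continuity/Zariski-closure argument.
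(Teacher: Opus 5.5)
Your argument is correct, and reading ``$PR$'' as $PT$ is right. It does not follow the paper's route, though.

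The paper proves Theorem~\ref{CGTtangent2} \emph{without} the circle. It encodes confocality of $\mathcal{R},\mathcal{G},\mathcal{X}$ as having four common tangents and then dualises. The three conics then pass through four common points, and the tangent pairs at $P,Q$ and at $S,T$ become degenerate conics. The three missing tangencies $PT$, $SQ$, $QT$ then follow from the Cayley--Bacharach theorem for quartics, or equivalently from a Chasles quadrilateral of collinearities in the $\mathbb{P}^5$ of conics.

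You stay in the Euclidean picture instead:
\begin{itemize}
\item you build the circle $\mathcal{C}$ centred at $M$ from $a,b,c$ via the bisector property;
\item you let $d$ be the fourth common tangent of $\mathcal{X}$ and $\mathcal{C}$;
\item you use the converse half of Theorem~\ref{CGTtangent}, together with the orthogonality of the two confocal conics through a point, to identify $c\meet d=Q$ and $b\meet d=T$.
\end{itemize}

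What your route buys:
\begin{itemize}
\item It reuses Theorem~\ref{CGTtangent} rather than setting up a new Cayley--Bacharach configuration.
\item It shows concretely why no ambiguity arises. $M$ is fixed before the circle, and the tangent direction $MP$ (resp.\ $MS$) picks out $\mathcal{R}$ (resp.\ $\mathcal{G}$) among the two confocal conics through $P$ (resp.\ $S$). This is exactly the ``construct the concurrence point first'' principle the paper advocates.
\end{itemize}

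What it costs:
\begin{itemize}
\item It is tied to real metric facts: distances, bisectors, orthogonality, and real tangents from $P$ and $S$.
\item Complex and degenerate cases need a separate continuity argument.
\item It is only a reduction. It rests on the converse half of Theorem~\ref{CGTtangent}, which the paper itself obtains from the same Cayley--Bacharach argument through the $\mathcal{R}\leftrightarrow\mathcal{C}$ symmetry of the projective version.
\end{itemize}
The paper's direct proof, by contrast, is purely projective, works over $\mathbb{C}$ for any four common tangents, and shows the configuration is combinatorially the Akopyan--Bobenko one.

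Some smaller points:
\begin{itemize}
\item Your use of the first half of Theorem~\ref{CGTtangent} at $Q$ and $T$ is superfluous, and so is the count placing $Q,T$ among the six vertices. Once $d$ is defined, the converse-half argument alone identifies $Q$ and $T$.
\item You should state that $M\neq P,S$, so that $MP=t_P$ and $MS=t_S$. This holds because a real line cannot touch two distinct confocal conics, so $a$ is tangent to neither $\mathcal{R}$ nor $\mathcal{G}$.
\item You should also state that $c\meet d\neq P$, since at most two tangents to $\mathcal{X}$ pass through $P$.
\end{itemize}
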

\smallskip


\section{The projective nature of the theorem}
Although the CQT makes statements about confocal conics and circles, it is intrinsically projective in nature.
Actually, it unfolds its entire beauty and symmetry if it is interpreted projectively in an appropriate way.
For this, we recall a highly appropriate way to interpret circles, midpoints, and foci of a conic in a purely projective setup (see \cite{RG11}, Chapter 18 and 19 for an extensive introduction).
\medskip 

\subsection{The absolute circle points}

Over the complex projective plane, all circles have two points in common. These are sometimes
called the {\it absolute circle points} and have the following concrete homogeneous coordinates
\[
\mathsf{I}=(1,i,0), \qquad \mathsf{J}=(1,-i,0).
\]
In homogeneous coordinates, a circle equation takes the general form
\[
x^2+y^2+a\cdot xz+b\cdot yz+ d\cdot z^2=0.
\]
A simple calculation shows that that plugging in the coordinates of $\mathsf{I}$ or $\mathsf{J}$ satisfies this equation, since
\[
1^2+(\pm i)^2+a\cdot x\cdot 0 +b\cdot y \cdot 0+ d\cdot 0^2=0.
\]
This is in nice accordance with the fact that any two conics in suitably generic position have four intersections.
In the case of two distinct circles, we can never experience more than two real intersections, since two others 
($\mathsf{I}$ and $\mathsf{J}$)  are already complex. The two points $\mathsf{I}$ and $\mathsf{J}$ span the line at infinity  $l_\infty$,  and taken as a pair, they may be considered as a degenerate dual conic, whose primal part is the line at infinity (considered as a double line).  Conversely, we can characterise a circle by the property that it is a conic which passes through $\mathsf{I}$ and $\mathsf{J}$.
\smallskip

The two absolute points can be used to construct (or calculate) the center of a circle in the following way.
Given a circle $\mathcal{C}$, we know that $\mathsf{I}$ and $\mathsf{J}$ are on it. The join of $\mathsf{I}$ and $\mathsf{J}$ is the line at infinity, and hence we can construct the polar of $l_\infty$ by simply intersecting the tangents to 
$\mathcal{C}$ at $\mathsf{I}$ and $\mathsf{J}$. For any circle, the polar of  $l_\infty$ is its center. Thus, the center of $\mathcal{C}$ is the intersection of the tangents to $\mathcal{C}$ at $\mathsf{I}$ and $\mathsf{J}$.
%
This fact has a nice generalization that allows the characterisation of the foci of any conic in purely projective terms using $\mathsf{I}$ and $\mathsf{J}$. Consider a general conic $\mathcal{X}$ that is neither degenerate nor a circle; then the points $\mathsf{I}$ and $\mathsf{J}$ do not lie on it. Thus, we can consider the tangents through $\mathsf{I}$ and $\mathsf{J}$ to the conic~$\mathcal{X}$. They are four distinct lines and they have six intersections (that naturally group into pairs). One of these pairs is $\{\mathsf{I}, \mathsf{J}\}$ itself. Let the other pairs be $\{{f_1}, {f_2}\}$ and
$\{{g_1}, {g_2}\}$;  these two sets are pairs of foci of $\mathcal{X}$. One might wonder
why a conic has {\it four} foci and not two. 
It turns out that for a real conic~$\mathcal{X}$, one of the pairs always consists of real points, while the other one always consists of complex points; foci that we just don't see. A proof of the correctness of this construction that ultimately goes back to a degenerate version of Pascal's Theorem can be found in \cite{RG11}, {Section 19.4.}
\begin{figure}[ht]
\centering
\includegraphics[width=.35\textwidth]{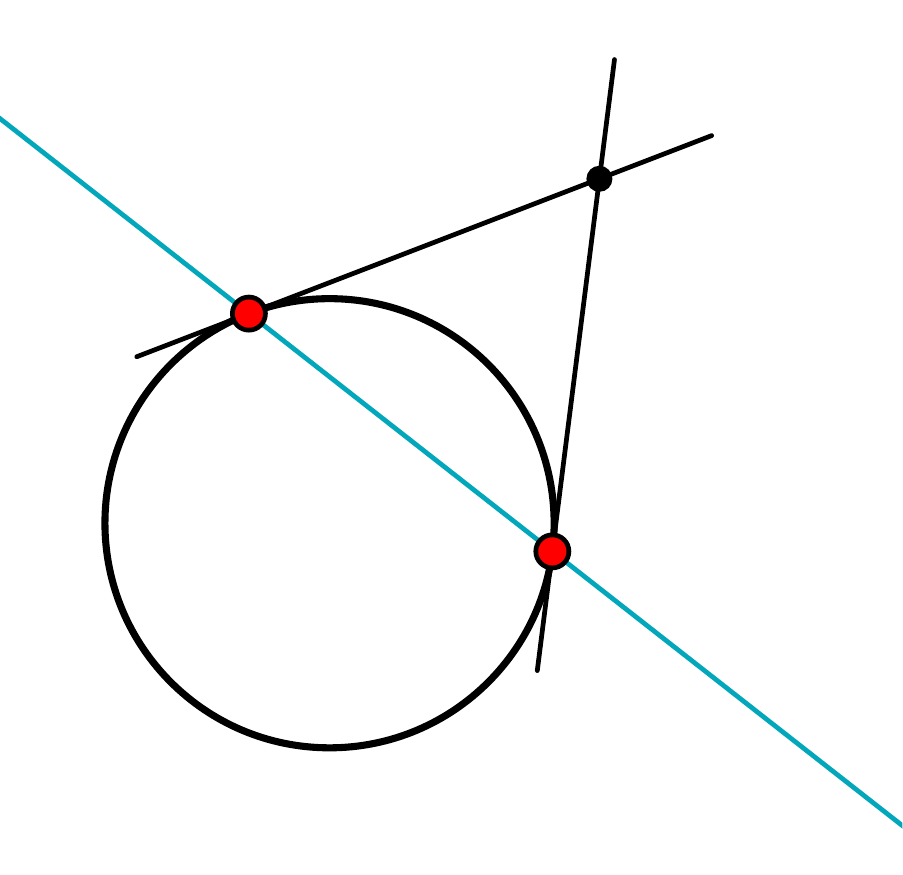}\qquad
\includegraphics[width=.55\textwidth]{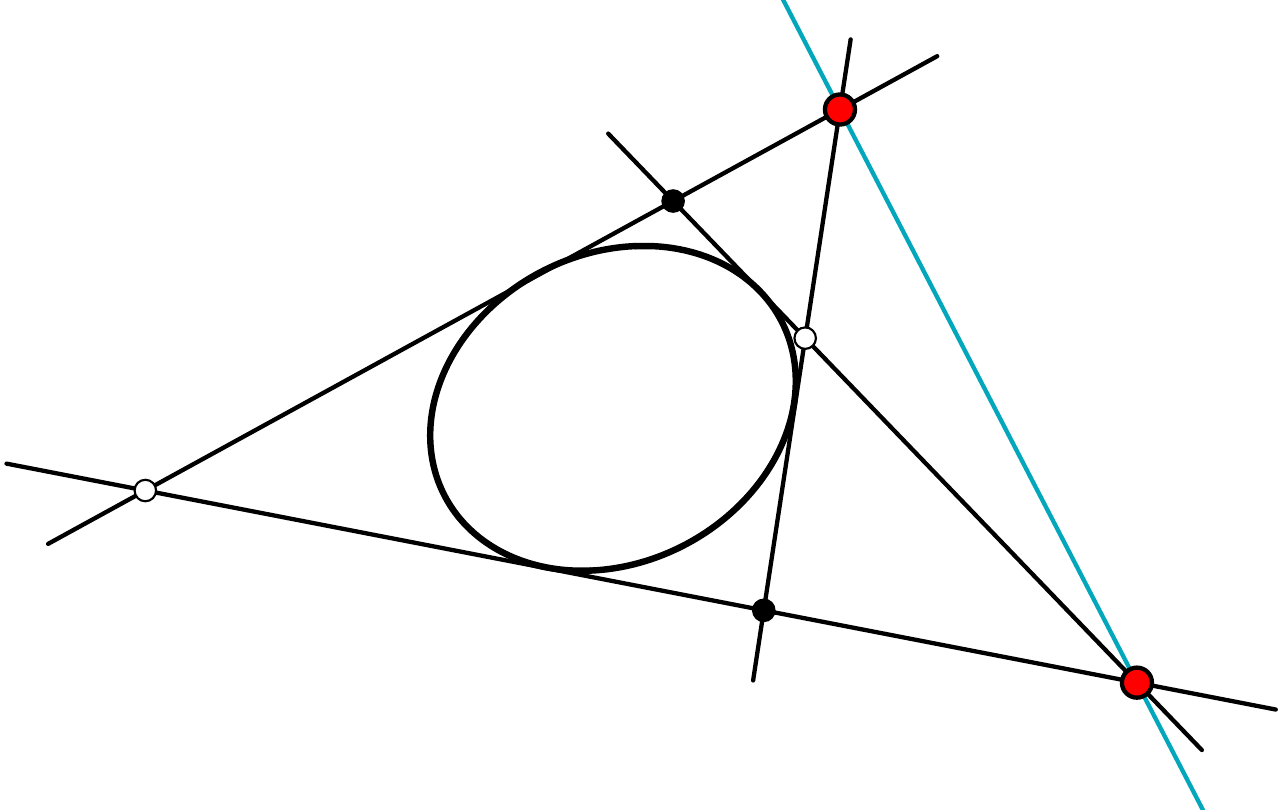}
\begin{picture}(0,0)
\put(-300,79){\footnotesize {$\mathsf{I}$}}
\put(-256,43){\footnotesize {$\mathsf{J}$}}
\put(-252,87){\footnotesize {$M$}}
\put(-232,7){\footnotesize {$l_\infty$}}
\put(-63,99){\footnotesize {$\mathsf{I}$}}
\put(-23,24){\footnotesize {$\mathsf{J}$}}
\put(-252,87){\footnotesize {$M$}}
\put(-30,2){\footnotesize {$l_\infty$}}
\put(-70,69){\footnotesize {$f_1$}}
\put(-175,35){\footnotesize {$f_2$}}
\put(-95,97){\footnotesize {$g_1$}}
\put(-92,23){\footnotesize {$g_2$}}
\end{picture}
\caption{Constructing the center of a circle and the foci of a conic.
{In this drawing the circular points $\mathsf{I}$ and $\mathsf{J}$ are transformed to real positions, to make all construction elements become real. The situation shown is projectively equivalent to the situation in which $\mathsf{I}=(1,i,0)$ and $\mathsf{J}=(1,-i,0)$.}
}	
\label{fig:foci}
\end{figure}

Figure~\ref{fig:foci} illustrates the two constructions by applying a projective transformation that moves the points  $\{\mathsf{I}, \mathsf{J}\}$ into a real and finite position 
The image highlights all essential incidence information about the construction.
\smallskip

In the light of this projective characterisation of foci, {\it conics being  confocal} has a nice purely incidence-theoretic characterisation: two conics share the same foci if their tangents to  $\{\mathsf{I}, \mathsf{J}\}$ are identical. 
In other words, a collection of confocal conics shares the same four tangents (two of its intersections are  $\mathsf{I}$ and $\mathsf{J})$. In even other words: they are {\it co-dependent}. If we represent a conic by a quadratic form $p^TAp=0$ where $A$ is a symmetric $3\times 3$ matrix, then three conics share the same four points if their corresponding matrices are linearly dependent. Dually, three conics are co-dependent if the \emph{inverses} of their matrices
are linearly dependent. In this case, they have four tangents in common.

\medskip
\subsection{A projective view of the CQT}
By introducing the special circular points  
$\mathsf{I}$ and $\mathsf{J}$, we reduced Euclidean concepts like {\it circle, midpoint, foci} to statements that can be expressed entirely in projective terms.
In these descriptions, everything is encoded in  terms of {\it points, lines, conics, incidences} and {\it tangencies}. Every such statement and construction remains invariant under projective transformation. This allows us to derive a purely projective description of Euclidean statements, which is what we implicitly did in the schematic picture in Figure~\ref{fig:foci}.

This allows us to translate the CQT into a purely projective description. We express \emph{confocality} by ``sharing tangents, two of whose intersections are  $\mathsf{I}$ and $\mathsf{J}$'', \emph{circle} by ``conic through  $\mathsf{I}$ and $\mathsf{J}$'' and \emph{center of the circle} by ``the point where the tangents at  $\mathsf{I}$ and $\mathsf{J}$ intersect''. Since this way to express the statement is entirely based on  projective concepts, we do not have to require that  $\mathsf{I}$ and $\mathsf{J}$ take any special position.  That is, after a projective transformation of the situation, $\mathsf{I}$ and $\mathsf{J}$ can be two arbitrary points rather than the special complex points defined above, and the statement still holds true and is equivalent to the original statement.

It is difficult to make a drawing of this projective situation that does not look overcrowded and unintelligible. An attempt is made in Figure~\ref{fig:CGTProj}. The following theorem is a projective translation of Theorem~\ref{CGTtangent}. For better readability we mention the corresponding color of the objects in the theorem. 

\noindent
\begin{theorem} {\bf (CQT with tangents, projective version)}:\label{CGTproj}
Let $t_1\upto t_4$ be four lines (gray) in general position in the (real or complex) projective plane.
Let $\mathsf{I}$ and $\mathsf{J}$ (green points) be one pair of intersections.
Let  $\mathcal{X}$ (black) and  $\mathcal{R}$ (red) be two distinct conics tangent to $t_1\upto t_4$.
Let $P$ and $Q$ (red) be two distinct points on  $\mathcal{R}$. Consider the four tangents $a,b,c,d$ (blue lines) through $P$ and $Q$ to
the conic $\mathcal{X}$. Then there is a conic  $\mathcal{C}$ (green) tangent to $a,b,c,d$ passing through  $\mathsf{I}$ and $\mathsf{J}$ and such that the tangents at $\mathsf{I}$ and $\mathsf{J}$ (green lines) to
$\mathcal{C}$ and the tangents at $P$ and $Q$ (red lines) to $\mathcal{R}$ meet in a point.
\end{theorem}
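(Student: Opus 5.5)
The plan is to prove the statement directly in line coordinates, where it becomes a linear-algebra identity among quadratic forms on lines. A conic $\mathcal{K}$ is then described by its dual quadratic form $l^T K^{*} l=0$, which vanishes exactly on the tangents of $\mathcal{K}$. A point pair $\{A,B\}$ gives the rank-two dual form $(l\cdot A)(l\cdot B)$, vanishing on all lines through $A$ or $B$. A double point $A$ gives $(l\cdot A)^2$.

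\emph{Step 1 (setting up the pencil).} The conics tangent to $t_1\upto t_4$ form a pencil in these dual coordinates. Since $\mathsf{I}$ and $\mathsf{J}$ are two opposite intersections of the $t_k$, each $t_k$ passes through $\mathsf{I}$ or $\mathsf{J}$. Hence the point pair $(l\cdot \mathsf{I})(l\cdot \mathsf{J})$ is a (degenerate) member of this pencil. Because $\mathcal{X}$ and $\mathcal{R}$ are distinct members, we can write, after scaling,
\[
X^{*}(l)=R^{*}(l)+\nu\,(l\cdot \mathsf{I})(l\cdot \mathsf{J})
\]
for some $\nu$.

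\emph{Step 2 (the bitangent form of $\mathcal{R}$).} Let $t_P,t_Q$ be the tangents to $\mathcal{R}$ at $P,Q$, and let $M=t_P\meet t_Q$; these are distinct lines because $P\neq Q$. The conics tangent to $t_P$ at $P$ and to $t_Q$ at $Q$ form a bitangent pencil. In line coordinates this pencil is spanned by the point pair $(l\cdot P)(l\cdot Q)$ and the double point $(l\cdot M)^2$. I would verify this by a short check in a frame with $P,Q,M$ as base points. Hence
\[
R^{*}(l)=(l\cdot P)(l\cdot Q)+\mu\,(l\cdot M)^2 .
\]

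\emph{Step 3 (defining $\mathcal{C}$).} Define
\[
C^{*}(l):=X^{*}(l)-(l\cdot P)(l\cdot Q)=\mu\,(l\cdot M)^2+\nu\,(l\cdot \mathsf{I})(l\cdot \mathsf{J}).
\]
By the first expression, $C^{*}$ vanishes on $a,b,c,d$: these are tangent to $\mathcal{X}$ and pass through $P$ or $Q$. So $\mathcal{C}$ is tangent to $a,b,c,d$. By the second expression, $\mathcal{C}$ lies in the bitangent pencil spanned by the point pair $\{\mathsf{I},\mathsf{J}\}$ and the double point $M$. By the same fact as in Step 2, every member of that pencil passes through $\mathsf{I}$ and $\mathsf{J}$, with tangents $M\join\mathsf{I}$ and $M\join\mathsf{J}$ there. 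Therefore the tangents to $\mathcal{C}$ at $\mathsf{I},\mathsf{J}$ and the tangents $t_P,t_Q$ to $\mathcal{R}$ all pass through $M$, which is the claim. The construction also shows $\mathcal{C}$ is canonically attached to the choice of $\mathcal{R}$, which is exactly the point emphasised after Theorem~\ref{CGTtangent}.

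\emph{Main obstacle.} The algebra is short; the expected difficulty is nondegeneracy.
\begin{itemize}
\item We need $\nu\neq 0$, which holds because $\mathcal{X}\neq\mathcal{R}$.
\item We need $\mu\neq0$, which holds because $\mathcal{R}$ is a proper conic and not the point pair $\{P,Q\}$.
\item We need $C^{*}$ to be a nondegenerate conic rather than a point pair, and we need $M\notin \mathsf{I}\join\mathsf{J}$.
\end{itemize}
For the third item I would compute the determinant of $\mu MM^T+\nu(\mathsf{I}\mathsf{J}^T+\mathsf{J}\mathsf{I}^T)/2$. It is a nonzero multiple of $\mu\nu\det(M,\mathsf{I},\mathsf{J})^2$, so degeneracy happens exactly when $M$ lies on the line $\mathsf{I}\join\mathsf{J}$. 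In that situation I would treat the statement as a limiting case, or state it with a degenerate $\mathcal{C}$, as the remarks after Theorem~\ref{CGTtangent} anticipate for limit situations. General position of $t_1\upto t_4$ ensures that $\mathsf{I}\neq\mathsf{J}$ and that the pencil is a genuine one.
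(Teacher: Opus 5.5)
Your proof is correct, and it takes a genuinely different route from the paper's main argument. The paper passes to the primal Theorem~\ref{CGT5}. It reads the configuration as a degenerate instance of the Cayley--Bacharach theorem for two quartics (Theorem~\ref{CBT4}: four lines against two lines plus a conic). It then merges the two blue lines into a double line by a limit process that is only sketched geometrically.

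You stay in line coordinates and write down the linear relations directly:
\begin{itemize}
\item $X^*$, $R^*$ and $(l\cdot\mathsf{I})(l\cdot\mathsf{J})$ are dependent;
\item $R^*$, $(l\cdot P)(l\cdot Q)$ and $(l\cdot M)^2$ are dependent;
\item $C^*$ is \emph{defined} as the common point of the line through $X^*$ and $(l\cdot P)(l\cdot Q)$ with the line through $(l\cdot M)^2$ and $(l\cdot\mathsf{I})(l\cdot\mathsf{J})$.
\end{itemize}
These four collinearities among six (dual) conics are exactly the complete quadrilateral in the $\mathbb{P}^5$ of conics. The paper describes this only in its historical remarks, as Chasles' original reasoning; you have carried that argument out explicitly.

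Your approach has three advantages. It is self-contained, with no appeal to Cayley--Bacharach and its distinctness hypotheses, and no limit process. Multiplicities and complex data are handled automatically: if $P\in\mathcal{X}$, then $C^*$ agrees with $X^*$ on the lines through $P$, so $\mathcal{C}$ touches $a=b$ at $P$, which is precisely the limiting behaviour described after Theorem~\ref{CGTtangent}. And $\mathcal{C}$ is attached canonically to $\mathcal{R}$, so the second solution never enters. The paper's route, in exchange, places the result in the Cayley--Bacharach framework. That explains conceptually why three incidences come for free, and shows via the reversed construction order where the spurious solution hides.

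Two small points on your nondegeneracy discussion. First, the determinant equals $-\tfrac14\,\mu\nu^2\det(\mathsf{I},\mathsf{J},M)^2$, so the factor is $\nu^2$ rather than $\nu$; the vanishing locus is the same.

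Second, and more importantly, the exceptional case $M\in\mathsf{I}\join\mathsf{J}$ is not an artefact of your method. For any proper conic through $\mathsf{I}\neq\mathsf{J}$, the tangents at these points meet in the pole of the secant $\mathsf{I}\join\mathsf{J}$, which never lies on that line. But the concurrence point in the conclusion is forced to be $M=t_P\meet t_Q$. So no proper $\mathcal{C}$ exists in that case. The statement therefore tacitly needs $M\notin\mathsf{I}\join\mathsf{J}$, or must admit a degenerate $\mathcal{C}$, as you suggest. In the Euclidean picture this condition says $t_P$ and $t_Q$ are not parallel, i.e.\ $P$ and $Q$ are not antipodal on $\mathcal{R}$.
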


\begin{figure}[ht]
\centering
\includegraphics[width=.75\textwidth]{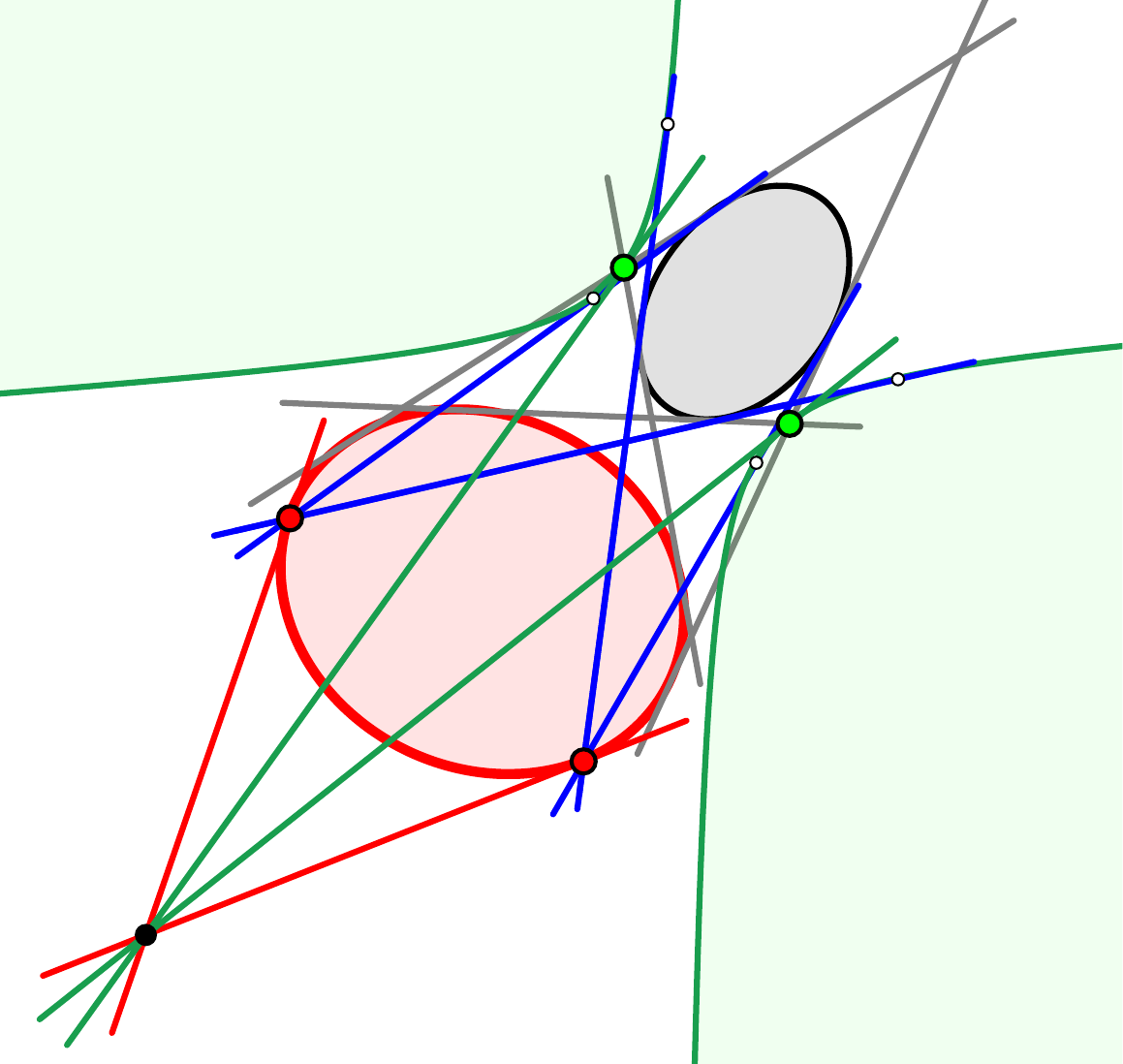}
\begin{picture}(0,0)
\put(-210,134){\footnotesize {$t_1$}}
\put(-205,152){\footnotesize {$t_2$}}
\put(-125,212){\footnotesize {$t_3$}}
\put(-37,248){\footnotesize {$t_4$}}
\put(-94,174){\footnotesize {$\mathcal{X}$}}
\put(-154,110){\footnotesize {$\mathcal{R}$}}
\put(-54,90){\footnotesize {$\mathcal{C}$}}
\put(-125,185){\footnotesize {$\mathsf{I}$}}
\put(-80,137){\footnotesize {$\mathsf{J}$}}
\put(-220,124){\footnotesize {$a$}}
\put(-212,110){\footnotesize {$b$}}
\put(-140,50){\footnotesize {$c$}}
\put(-130,52){\footnotesize {$d$}}
\put(-	192,118){\footnotesize {$P$}}
\put(-	137,77){\footnotesize {$Q$}}


\end{picture}
\caption{A projective interpretation of the CQT.}	
\label{fig:CGTProj}
\end{figure}

We  did not include the second part of Theorem~\ref{CGTtangent} in this projective translation because it is logically the same statement. It arises when one systematically interchanges the roles
of 
$\mathcal{R} \leftrightarrow \mathcal{C}$, 
$\{P,Q\} \leftrightarrow \{\mathsf{I},\mathsf{J}\}$, ``gray"  $\leftrightarrow$ ``blue" and
``red"  $\leftrightarrow$ ``green". Translating Theorem~\ref{CGTtangent} into the projective world unveils the inherent underlying
symmetry of the statement. 

\medskip
Notice that there are two types (gray and blue) of {\it four tangents} involved in this situation. To get a picture of the configuration that is a bit more accessible to the human mind we projectively dualize the situation. We will call the situation in Figure~\ref{fig:CGTProj} the {\it dual} situation. Switching to the {\it primal} situation
by dualizing, points become lines and vice versa. \emph{Co-dependent} conics that are tangent to 
four lines become \emph{dependent} conics having four points in common.
Figure \ref{fig:CGTProjDual} shows the dualized situation, which is optically significantly less crowded.
We may formulate the primal version as follows:
\begin{theorem} {\bf (CQT with tangents,  primal version)}: \label{CGT5} 
Let $\mathcal{X}$ and $\mathcal{R}$ be two conics that have four distinct (gray) points in common.
Consider two lines $\mathsf{I}$ and $\mathsf{J}$ through two pairs of these points.
Consider two tangents at $P$ and $Q$ to $\mathcal{R}$ and intersect them with $\mathcal{X}$ in four blue points. Then there is a conic $\mathcal{C}$ through the blue points that is simultaneously tangent to 
$\mathsf{I}$ and $\mathsf{J}$  and such that the touching points at $\mathsf{I}$ and $\mathsf{J}$ and
 $P$ and $Q$ are collinear.
\end{theorem}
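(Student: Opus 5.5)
The plan is to work entirely with quadratic forms in the pencil spanned by $\mathcal{X}$ and $\mathcal{R}$, and to exhibit $\mathcal{C}$ explicitly as a suitable linear combination of forms, so that no choice among several solutions ever occurs. Write $X(p)=p^TAp$ and $R(p)=p^TBp$ for quadratic forms defining $\mathcal{X}$ and $\mathcal{R}$. Write $I(p)$ and $J(p)$ for linear forms defining the lines $\mathsf{I}$ and $\mathsf{J}$. Let $t_P(p)$ and $t_Q(p)$ be linear forms for the tangents to $\mathcal{R}$ at $P$ and $Q$, and let $\ell(p)$ be a linear form for the line $P\join Q$.

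First, the product $I\cdot J$ is a degenerate conic through the four gray points. Since $\mathcal{X}$ and $\mathcal{R}$ are distinct conics of the pencil through these four points, after rescaling $X$ we can write
\[
X = R - s\, I J
\]
for some scalar $s$.

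Second, let $M=t_P\meet t_Q$, assuming the tangents are distinct, which holds since $P\neq Q$. Then $\ell$ is the polar of $M$ with respect to $\mathcal{R}$. The conics tangent to $t_P$ at $P$ and to $t_Q$ at $Q$ form the bitangent pencil spanned by $t_Pt_Q$ and $\ell^2$. Hence
\[
R=\kappa\, t_Pt_Q+\nu\,\ell^2
\]
for suitable scalars $\kappa,\nu$. To justify this, the restriction of $R-\nu\ell^2$ to a further point fixes $\nu$. One then checks that the remaining conic contains the degenerate configuration, or one simply verifies the identity in coordinates with $P=(1,0,0)$, $Q=(0,1,0)$, $M=(0,0,1)$.

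Third, define the candidate conic
\[
C \;:=\; X-\kappa\, t_Pt_Q \;=\; \nu\,\ell^2 - s\, IJ .
\]
The first expression shows that $C$ vanishes wherever both $X=0$ and $t_Pt_Q=0$, which is exactly at the four blue points. The second expression shows that the restriction of $C$ to the line $\mathsf{I}$ is $\nu\,\ell^2|_{\mathsf{I}}$, a perfect square. So $\mathcal{C}$ is tangent to $\mathsf{I}$ at the point $\mathsf{I}\meet\ell$, and likewise tangent to $\mathsf{J}$ at $\mathsf{J}\meet\ell$. Both touching points lie on $\ell=P\join Q$, which is the claimed collinearity. This settles the statement, and no choice among multiple solutions has entered.

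The main obstacle is not this identity but the non-degeneracy bookkeeping. We must make sure that $\nu\neq 0$ and $s\neq 0$: the first holds because $\mathcal{R}$ is nondegenerate, and the second because $\mathcal{X}\neq\mathcal{R}$. We must also make sure that $\mathcal{C}$ is a genuine conic rather than identically zero, and that "tangent" is read algebraically, as a double root on the line, when $\mathcal{C}$ degenerates. This happens, for instance, when $\ell$ passes through $\mathsf{I}\meet\mathsf{J}$, or when blue points coincide because $P$ or $Q$ lies on $\mathcal{X}$. I would handle these cases by stating the result over $\mathbb{C}$ with tangency understood as a double intersection, and checking the listed exceptional cases separately.
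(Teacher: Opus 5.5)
Your proof is correct, and it differs from the paper's main proof. The paper embeds the primal configuration in a Cayley--Bacharach situation for two quartics: four red lines versus two blue lines plus a conic, with the pair of green conics as a third quartic. It applies Theorem~\ref{CBT4} assuming $16$ distinct intersection points. It then obtains the tangencies by letting the two blue lines collapse into a double line, a limit it argues only geometrically.

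You stay in the space of quadratic forms instead. $X,R,IJ$ are collinear in $\mathbb{P}^5$ (the pencil through the gray points), and $R,t_Pt_Q,\ell^2$ are collinear (the bitangent pencil). These two lines meet at $R$ and so span a plane. Hence the line through $X$ and $t_Pt_Q$ meets the line through $IJ$ and $\ell^2$, and your $C=X-\kappa t_Pt_Q=\nu\ell^2-sIJ$ is that meeting point. This is Chasles' $\mathbb{P}^5$ argument, which the paper only sketches in its historical remarks, made explicit and rigorous.

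What your route buys: no distinctness hypothesis and no limiting process. Multiplicities come out automatically: $C|_{t_P}=X|_{t_P}$ handles coinciding blue points, and $C|_{\mathsf{I}}=\nu\,\ell^2|_{\mathsf{I}}$ gives tangency as a double root. It also proves exactly the paper's choice-free reformulation. $\langle IJ,\ell^2\rangle$ is the pencil of conics touching $\mathsf{I}$ and $\mathsf{J}$ at their intersections with $P\join Q$. Your identity shows directly that the member of this pencil through one blue point passes through all four, which is why the ambiguity never arises. What the paper's route buys: it exhibits the ``three incidences for free'' as an instance of a general completion principle. It also locates the ambiguity as a reversed construction order within the same incidence pattern.

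Three small remarks on your exception list:
\begin{itemize}
\item In this primal picture the blue points on $t_P$ coincide when $t_P$ is also tangent to $\mathcal{X}$ (a common tangent of $\mathcal{X}$ and $\mathcal{R}$). They do not coincide when $P\in\mathcal{X}$: then $P$ is a gray point, and $t_P$ cuts $\mathcal{X}$ transversally there, since the four gray points are simple intersections. You have transplanted the dual-picture remark. Your identity covers the genuine case anyway.
\item The rescaling to $X=R-sIJ$ also uses $\mathcal{X}\neq\mathsf{I}\cup\mathsf{J}$.
\item The case $\ell=\mathsf{I}$ (or $\ell=\mathsf{J}$), where $\mathcal{C}$ contains that line, falls under your exception that $\ell$ passes through $\mathsf{I}\meet\mathsf{J}$.
\end{itemize}
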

\noindent
Again the symmetry of the statement is prominent and clearly visible in the corresponding picture. Proving this version of the  theorem automatically proves all other versions that we previously stated.
\medskip

\begin{figure}[ht]
\centering
\includegraphics[width=.75\textwidth]{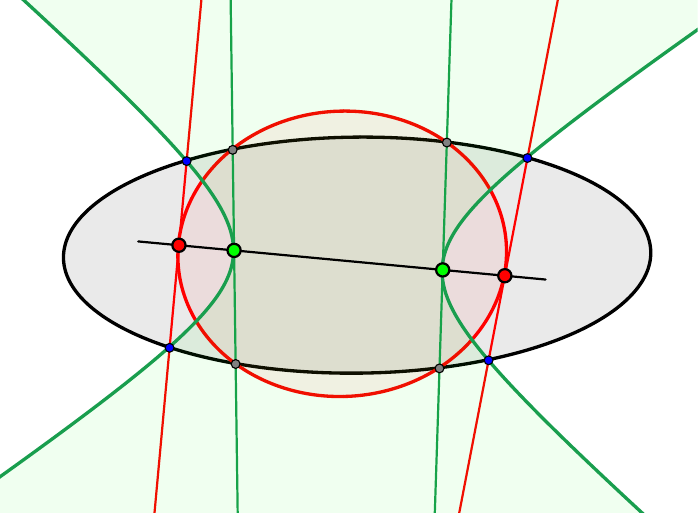}
\begin{picture}(0,0)
\put(-172,180){\footnotesize {$\mathsf{I}$}}
\put(-103,180){\footnotesize {$\mathsf{J}$}}
\put(-199,180){\footnotesize {$P$}}
\put(-53,180){\footnotesize {$Q$}}
\put(-238,68){\footnotesize {$\mathcal{X}$}}
\put(-244,168){\footnotesize {$\mathcal{C}$}}
\put(-158,36){\footnotesize {$\mathcal{R}$}}

\end{picture}
\caption{The primal version of the projective interpretation of the CQT.}	
\label{fig:CGTProjDual}
\end{figure}

One might wonder where the ambiguity that caused the problems in the flawed  formulation of the original CQT is hiding. We explain the situation in the primal projective picture. After the black and red conic and 
the red and green lines are drawn, we ask for a conic that passes through the four blue points and is tangent to one of the green lines. However, the geometric operation ``conic through 4 points and tangent to 1 line"
is not uniquely defined. In general, there are two conics satisfying this property, since algebraically this leads to the problem of solving a quadratic equation. These two conics are the equivalent of the two possible choices of circles in, for instance, Darboux's original formulation. In a generic situation, only  one of those two conics will also be tangent to the other green line, but a special case might still arise where both conics are tangent to both green lines.
However, only one of them will have the property that the four touching points (the red and the green ones)
are at the same time collinear. In our carefully chosen formulation of the theorems we took care of this by asking for  \emph{both} requirements at once.
\medskip

In a sense, the conclusion of  the theorem states several things at once (tangency to the two green lines and collinearity of the four points). We can use these multiple conclusions to reorganise the construction sequence of the elements in the drawing so that we do not run into the ambiguous situation (in other words, we avoid asking for the construction of a conic that might introduce ambiguities). This can be done as follows.
Start with the black and the red conic, and then
draw the two red tangents to the red conic, the two green lines and the four blue points. Now, proceed as follows. 
\begin{itemize}
\item Construct the two red touching points of the red tangents.
\item Connect them by a black line.
\item Intersect this line with the two green lines to also obtain the green touching points.
\item Construct the {\it unique} (!) conic tangent to the green lines at the green intersection points that also passes through one of the gray points.
\end{itemize}

This conic  will now simultaneously satisfy three 
additional incidences: it will pass through the remaining three gray points (we will prove this  in the next section).
A little remark about the uniqueness of the construction of the green  conic is appropriate here. We ask for
a conic that passes through one point (gray) and at two places has a specified tangent (green lines) at a specified position at this tangent (green points). The fact that the green points lie on the green lines removes the ambiguity by forcing the (four) different solutions of a general ``conic-through-4-points-tangent-to-1-line''
operation to coincide.

\medskip
If we retranslate this version to the original case of the Euclidean CQT (version in Theorem~\ref{CGTtangent}) about confocal conics and lines, the black line plays the role of the circle center $M$ and the green conic plays the role of the circle. The statement then reads as follows (in the original dual world): 
If we take a circle (green conic through $\mathsf{I}$ and $\mathsf{J}$) with center $M$ (the polars of  $\mathsf{I}$ and $\mathsf{J}$ intersect in~$M$) that is tangent to one of the  lines $a,b,c,d$, then it is automatically tangent to the other three. This is exactly what the first part  the Theorem~\ref{CGTtangent} says. A similar translation can be done for the second part of Theorem~\ref{CGTtangent}  (which is in essence symmetric to the first one, as we saw). Observe that  this statement ultimately ends in three incidence statements that happen simultaneously: 
{\it if a circle with given center $M$  is tangent to a certain line, it will automatically be tangent to three others}.

This occurrence of three incidences happening at once strongly hints to a connection to the famous Cayley--Bacharach Theorem for which exactly such simultaneous conclusions arise. 
In fact, it turns out that our formulation is nothing else than a special case of the Cayley--Bacharach Theorem. We will explore this relation in the next section.

\medskip

\section{The Cayley--Bacharach Theorem approach}
So far, we have only rearranged the statement in various equivalent ways.
 We now turn to its actual proof. If we prove Theorem~\ref{CGT5}, then we automatically have proved Theorem~\ref{CGTtangent}, since it is just the dual of Theorem~\ref{CGT5} with  $\mathsf{I}$ and $\mathsf{J}$ moved to their actual positions. From there, all other theorems follow. Thus, our next goal will be proving Theorem~\ref{CGT5}.
\medskip

\subsection{Cayley--Bacharach arguments}

There is a surprising relationship between the CQT and the famous Cayley--Bacharach Theorem about 
incidence patterns on plane algebraic curves. Besides the fact that the simplest instance of the Cayley--Bacharach Theorem, which makes a statement about the incidence patterns of three cubics, was first discovered by Chasles, Chasles' original argument for 
the CQT used a predecessor of the Cayley--Bacharach Theorem. We will elaborate on this later.

Recall that by Bezout's Theorem, two plane algebraic curves 
of degree $d_1$ and degree $d_2$ that do not have infinitely many points in common intersect in $d_1 \cdot d_2$ points. For the proper count, 
it is important (as usual) to take  multiplicities of intersections, complex intersections and intersections at infinity into account. Thus, two curves of degree $d$ will have $d^2$ intersections. 

The first instance of what is now called the Cayley--Bacharach Theorem was spelled out by Chasles \cite{Cha1865}. It is a theorem about the intersection pattern of cubics in the projective plane:
\begin{theorem}\label{CBT3}
Let $\mathcal{A}$ and $\mathcal{B}$ be two curves of degree 3 intersecting in 9 distinct points. Any other degree 3 curve that passes through 8 of these intersections will automatically contain the ninth point.
\end{theorem}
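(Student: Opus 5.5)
We prove Theorem~\ref{CBT3} by the classical linear-algebra argument on the vector space of ternary cubic forms. Write $A$ and $B$ for cubic forms defining $\mathcal{A}$ and $\mathcal{B}$, and let $p_1,\ldots,p_9$ be their nine distinct intersection points. Homogeneous cubics form a $10$-dimensional vector space $V$. Requiring a cubic to vanish at a point is one linear condition. The goal is therefore to show that the eight conditions imposed by $p_1,\ldots,p_8$ are linearly independent. Then the cubics through these eight points form a $2$-dimensional subspace. Since $A$ and $B$ are not proportional (their curves meet in only finitely many points), this subspace is exactly $\mathrm{span}(A,B)$. Any cubic through $p_1,\ldots,p_8$ is then $\lambda A+\mu B$, and it vanishes at $p_9$ because both $A$ and $B$ do.

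The main step, and the expected obstacle, is the independence of the eight conditions. The first thing to exploit is that $\mathcal{A}$ and $\mathcal{B}$ meet in exactly nine distinct points. By B\'ezout, this implies they share no common component and every intersection is transversal with multiplicity one. This in turn restricts how the $p_i$ can lie on lines and conics.

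\begin{itemize}
\item \textbf{No four of the $p_i$ are collinear.} Suppose a line $\ell$ contained four of them. Then $\ell$ would meet $\mathcal{A}$ in four points, so $\ell$ is a component of $\mathcal{A}$. The same argument makes $\ell$ a component of $\mathcal{B}$. Hence $\mathcal{A}$ and $\mathcal{B}$ share the component $\ell$, contradicting finiteness of the intersection.
\item \textbf{No seven of the $p_i$ lie on a conic.} The same argument applies with a conic $K$: seven points exceed $2\cdot 3=6$, so $K$ would be a common component of $\mathcal{A}$ and $\mathcal{B}$, again a contradiction.
\end{itemize}

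With these two restrictions, we show independence by exhibiting, for each $j\le 8$, a cubic that vanishes at the seven points $p_i$ with $i\le 8$, $i\neq j$, but not at $p_j$. We choose it as a union of a conic and a line, or of three lines, according to the position of the points.

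\begin{itemize}
\item \textbf{Generic case.} Pass a conic through five of the seven points, chosen so that it misses $p_j$. Then cover the remaining two points by a line that also misses $p_j$.
\item \textbf{Degenerate cases.} These occur when three of the points are collinear, or six lie on a conic. Here we pick the line or the conic to absorb the special subset first, and then handle the rest as above.
\end{itemize}

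This case distinction is the fiddly part. An alternative is to argue by contradiction. If the conditions were dependent, the cubics through $p_1,\ldots,p_8$ would form a pencil of dimension at least $3$. Imposing one additional point on a line through three of the $p_i$ (or on a conic through six of them) then forces, via B\'ezout, a cubic in this family that contains that line or conic. Its residual component would be a conic or line containing too many of the $p_i$, contradicting the two restrictions above. I would try this contradiction route first, as it avoids the enumeration.

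Once independence is established, the dimension count of the first paragraph finishes the proof immediately.
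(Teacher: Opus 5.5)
The paper does not prove this statement. It quotes it as Chasles' classical theorem, the cubic case of Cayley--Bacharach (see \cite{EGH96}), so your proposal has to stand on its own.

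Your reduction is the standard one and is sound. Both restrictions follow from B\'ezout. For the conic bullet you should add that if $K$ is reducible, one of its lines carries at least four of the seven points, so the first bullet applies. Once $p_1,\dots,p_8$ impose independent conditions on cubics, your dimension count finishes the argument. The gap is that this independence lemma is the entire content of the theorem, and neither of your two routes actually proves it.

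\textbf{Contradiction route.} The mechanism you state does not work as written.
\begin{itemize}
\item If $p_1,p_2,p_3\in L$ and you impose one more point of $L$, every resulting cubic is $L\cdot Q$ with $Q$ through $p_4,\dots,p_8$. Five points is not ``too many''. Likewise, after absorbing a conic through six of the $p_i$, the residual line passes through only two points.
\item The contradiction must be a dimension count. The cubics you obtain form a space of dimension $\ge 2$, all divisible by $L$ (resp.\ by the conic). But the conics through $p_4,\dots,p_8$ form a space of dimension exactly $1$, because five points with no four collinear impose independent conditions on conics. Similarly, the line through $p_7,p_8$ is unique. Divisibility by the conic needs it irreducible, which holds once the collinear case is handled first.
\item The case where no three $p_i$ are collinear and no six lie on a conic is missing. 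There you must impose \emph{two} extra points on the line through $p_1,p_2$. This forces a residual conic through $p_3,\dots,p_8$, contradicting the case hypothesis; the global restriction forbids only seven points on a conic.
\end{itemize}

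\textbf{Direct route.} Your generic case asserts that the seven points can be split into five and two with $p_j$ on neither the conic nor the line. That is precisely what needs proof, since $p_j$ may be collinear with two of them or lie on the conic through five of them. The claim is true, but it has to be argued, for example as follows.
\begin{itemize}
\item Each line through $p_j$ contains at most two of the seven points, so the pairs whose line passes through $p_j$ are disjoint. Hence for fixed $a$ at least five pairs $\{a,x\}$ have their line avoiding $p_j$.
\item Suppose $p_j$ lay on the conics through the complements of two such pairs $\{a,b\}$ and $\{a,c\}$. These conics are irreducible, since no three of the seven points are collinear in this case. They would share five points, hence coincide, and so contain six of the seven points, contradicting the case hypothesis.
\end{itemize}
Your three-collinear case also needs the five-point fact for conics to produce a conic through the remaining four points that misses $p_j$.

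With either route repaired in this way, the proof is complete.
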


The story of the generalisation of this statement to higher degrees (and higher dimensions) 
is one of the deep success stories of algebraic geometry and the origin of many important concepts.  It is somewhat involved, since it included 
several false attempts and needed lots of refinement over the decades. 
Several formulations were proposed, and many of them suffered from special situations that were not properly covered.
For an excellent overview of the 
development and further directions, see \cite{EGH96}.
Here, we  only need the version, stated as Theorem \ref{CBT4}, that is the next more-complicated case 
after Theorem~\ref{CBT3}, about the intersection pattern of 16 points formed by intersecting  two quartics.  The statement directly follows from the (significantly more general) theorem proved by Bacharach  by plugging in the appropriate degree parameters. The general degree $d$ version can be found in the original 1895 paper of Bacharach  \cite{Bach1886}.\footnote{{This very powerful theorem is called} {\it Cayley--Bacharach Theorem} {since a first version of the statement was published by Cayley in 1843, generalising a Theorem of Chasles who stated the most elementary case for three cubics. However, Cayley's formulation  was lacking a crucial non-degeneracy condition. A corrected statement was finally formulated and proved by Isaak Bacharach} \cite{Bach1886}.
}
 
\begin{theorem}\label{CBT4}
Let $\mathcal{A}$ and $\mathcal{B}$ be two plane algebraic curves of degree $4$
intersecting in $16$ distinct points $\{p_1\upto p_{16}\}$.  Furthermore, let $p_{14},p_{15},p_{16}$ be noncollinear. Then any curve $\mathcal{C}$ of degree $4$ that contains  all the points
 $p_1\upto p_{13}$ also contains $p_{14},p_{15},p_{16}$.
\end{theorem}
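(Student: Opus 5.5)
We prove Theorem~\ref{CBT4} by reducing it to a statement about how many independent linear conditions the points $p_1\upto p_{13}$ impose on quartics, and then using the residual duality that underlies Bacharach's general theorem. Write $S=\mathbb{C}[x,y,z]$, and let $S_k$ denote the forms of degree $k$. Let $F,G\in S_4$ be the equations of $\mathcal{A},\mathcal{B}$. Let $Z=\{p_1\upto p_{16}\}$, $\Gamma'=\{p_1\upto p_{13}\}$ and $\Gamma''=\{p_{14},p_{15},p_{16}\}$. For a finite point set $\Delta$, write $I_\Delta(k)\subset S_k$ for the forms of degree $k$ vanishing on $\Delta$.

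\emph{Step 1 (the complete intersection).} Since the $16$ intersections are distinct, $\mathcal{A}$ and $\mathcal{B}$ meet transversally. By Max Noether's $AF+BG$ theorem, the ideal of $Z$ is $(F,G)$. The Koszul resolution
\[
0\to S(-8)\to S(-4)^2\to S\to S/(F,G)\to 0
\]
then gives the Hilbert function $h_Z(k)$ of $Z$, namely the number of independent conditions $Z$ imposes on $S_k$. Its values are $h_Z(k)=\binom{k+2}{2}$ for $k\le 3$ and $h_Z(4)=15-2=13$, and it satisfies the symmetry
\[
h_Z(k)+h_Z(5-k)=16 .
\]
In particular $I_Z(4)=\langle F,G\rangle$ is exactly the pencil spanned by $\mathcal{A}$ and $\mathcal{B}$. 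Also $I_Z(1)=0$: a line through all of $Z$ would be a common component of $\mathcal{A}$ and $\mathcal{B}$, contradicting finiteness of the intersection.

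\emph{Step 2 (the residual duality).} We then prove the Cayley--Bacharach duality for this complete intersection:
\[
h_Z(4)-h_{\Gamma'}(4)\;=\;\dim I_{\Gamma''}(1)-\dim I_Z(1).
\]
In words, the failure of $\Gamma'$ to impose independent conditions on quartics is measured by the linear forms vanishing on the residual set $\Gamma''$; the degree $1$ arises as $4+4-3-4$. I would prove this via the Gorenstein property of $A=S/(F,G)$. The socle of $A$ sits in degree $5$, and multiplication $A_k\times A_{5-k}\to A_5\cong\mathbb{C}$ is a perfect pairing. Concretely, $A_5$ can be identified with $\mathbb{C}$ through the residue/trace functional $\sum_{p\in Z} \phi(p)\,\mathrm{Jac}(p)^{-1}$, i.e.\ the Euler--Jacobi relation: for every form $\phi$ of degree $\le 4+4-3-1=4$, $\sum_{p\in Z}\phi(p)/\mathrm{Jac}(F,G,\ell)(p)=0$ suitably normalized. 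Applying this with $\phi=\ell\cdot C$, with $\ell$ linear and $C$ quartic, identifies the linear relations among the evaluation functionals on $Z$ and yields the displayed equality. This is the step I expect to be the main obstacle. If the Gorenstein-duality route becomes too heavy, the fallback is simply to cite Bacharach's general theorem \cite{Bach1886} with $d_1=d_2=4$, $k=4$, as the paper suggests.

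\emph{Step 3 (conclusion).} Since $p_{14},p_{15},p_{16}$ are noncollinear, no line contains $\Gamma''$, so $I_{\Gamma''}(1)=0$. Combined with $I_Z(1)=0$ from Step~1, the right-hand side of the duality vanishes, hence $h_{\Gamma'}(4)=h_Z(4)=13$. Therefore
\[
\dim I_{\Gamma'}(4)=15-13=2=\dim I_Z(4).
\]
Because $I_Z(4)\subseteq I_{\Gamma'}(4)$, the two spaces are equal. So any quartic $\mathcal{C}$ through $p_1\upto p_{13}$ has the form $\lambda F+\mu G$, and consequently vanishes at $p_{14},p_{15},p_{16}$.
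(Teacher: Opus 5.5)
The paper does not prove this statement itself; it specializes Bacharach's general theorem to $d_1=d_2=k=4$, which is your fallback. Your residue route is a genuinely different, self-contained approach and can be made to work, but Step~2 as written does not go through. There are four problems.
\begin{itemize}
\item $A=S/(F,G)$ is a one-dimensional Cohen--Macaulay ring, so it has no socle, and $\dim A_5=h_Z(5)=15$, not $1$. A perfect pairing $A_k\times A_{5-k}\to\mathbb{C}$ would force $h_Z(k)=h_Z(5-k)$. That contradicts your own correct relation $h_Z(k)+h_Z(5-k)=16$, since $h_Z(1)=3$ and $h_Z(4)=13$. What is one-dimensional is the cokernel of the evaluation map $S_5\to\mathbb{C}^{Z}$; the Gorenstein socle belongs to the Artinian reduction $S/(F,G,m)$ and sits in degree $6$.
\item More seriously, the Euler--Jacobi relation holds for forms of degree up to $4+4-3=5$, not $4$. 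With your bound you cannot apply it to $\ell\cdot C$, which has degree $5$, and that application is the entire argument.
\item You use the same $\ell$ for the fixed auxiliary line in the Jacobian and for the multiplier, which must range over all linear forms.
\item The relation only yields the inequality $\dim I_{\Gamma'}(4)/I_Z(4)\le\dim I_{\Gamma''}(1)$, not the equality. Fortunately that inequality is all Step~3 uses.
\end{itemize}

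The repaired argument is short. Fix a line $m$ missing $Z$, choose representatives $\hat p$ with $m(\hat p)=1$, and let $J=\det\,\partial(F,G,m)/\partial(x,y,z)$. At each $p$, the gradients of $F$ and $G$ are independent (by transversality) and annihilate $\hat p$ (by Euler's relation), while the gradient of $m$ does not; hence $J(\hat p)\neq 0$. Euler--Jacobi in the affine chart $m=1$, where all $16$ points lie, gives $\sum_{p\in Z}H(\hat p)/J(\hat p)=0$ for every $H\in S_5$. Take $H=L\cdot C$ with $L\in S_1$ arbitrary. Since $C$ vanishes on $p_1\upto p_{13}$, this gives $\sum_{q\in\{p_{14},p_{15},p_{16}\}}L(\hat q)C(\hat q)/J(\hat q)=0$. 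By noncollinearity, for each such $q$ you can pick $L$ through the other two points but not through $q$, so $C(q)=0$. Noether's theorem (your Step~1) is then needed only for the stronger conclusion that $C$ lies in the pencil.

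Compared with the paper's one-line citation, your approach gives a self-contained proof. It also shows why noncollinearity is exactly the right hypothesis: the relevant forms are linear, of degree $5-4=1$. The citation gives brevity and the full degree-$d$ generality.
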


\medskip

\begin{figure}[ht]
\centering
\includegraphics[width=.38\textwidth]{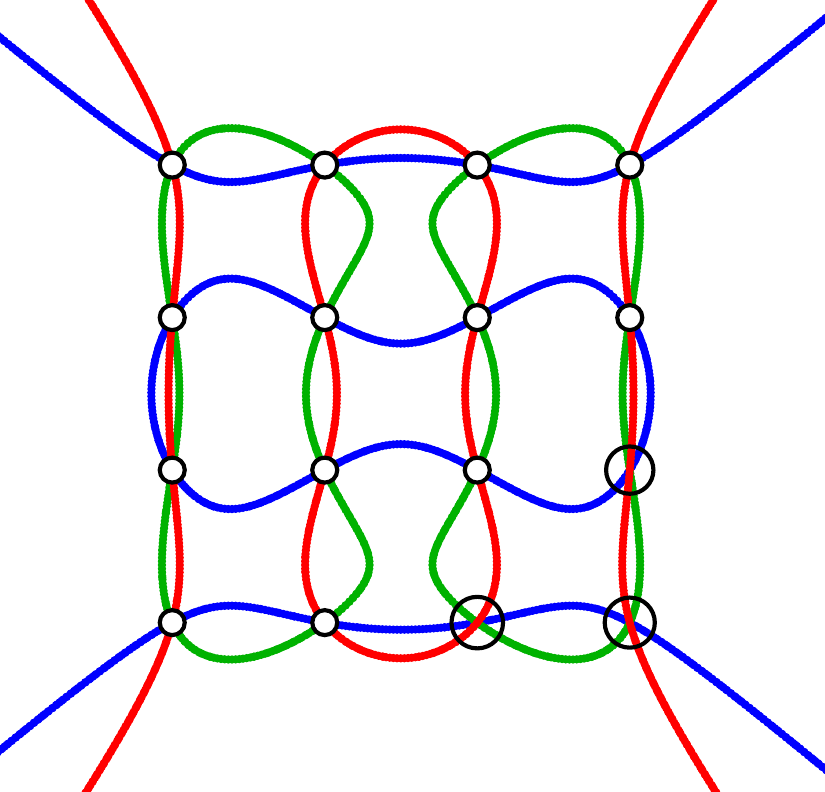}$\quad\;\;$
\includegraphics[width=.54\textwidth]{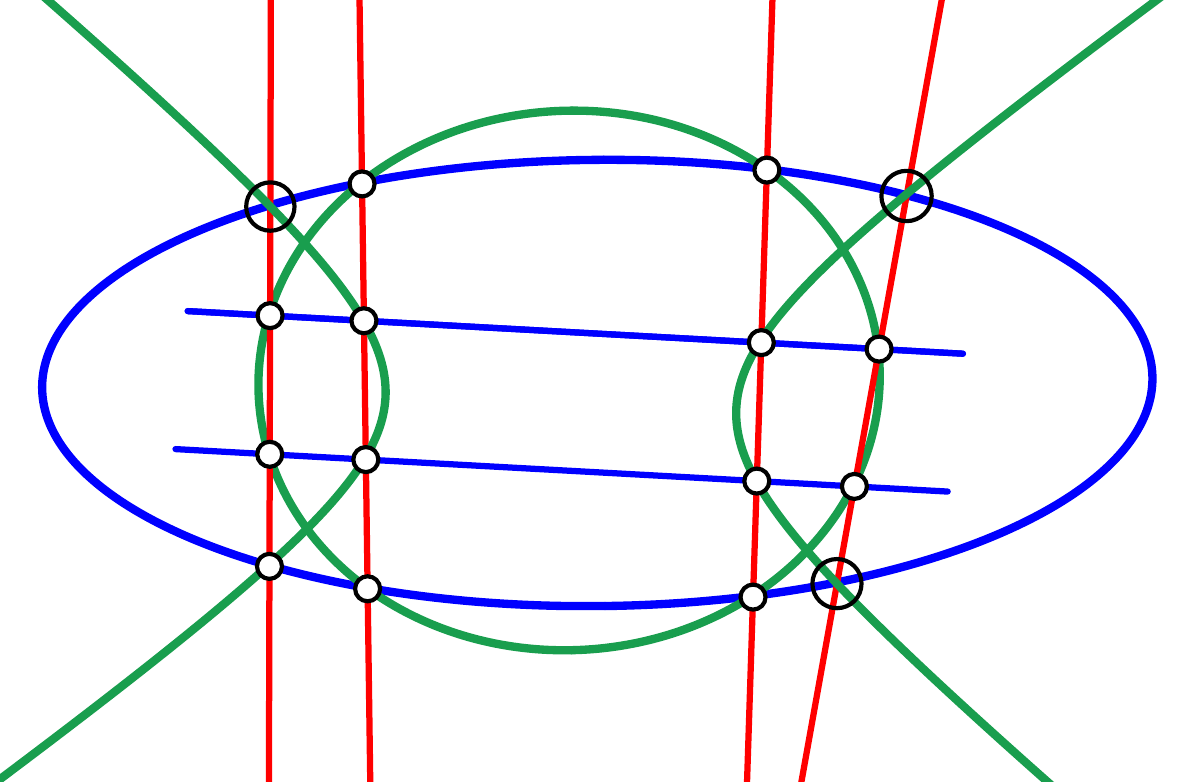}

\begin{picture}(0,0)
\end{picture}
\caption{The Cayley Bacharach Theorem for quartics in a  general (right) and a very special (left) situation. The additional incidences $p_{14}$, $p_{15}$, $p_{16}$ are indicated by small circles.}	
\label{fig:CBT1}
\end{figure}

Figure \ref{fig:CBT1} (left) illustrates the general situation. All curves shown in this picture (red, green and blue) are quartics. They have been chosen to pass through the 13 white points. The three additional incidences that are marked by small circles are the consequence of Theorem \ref{CBT4}.
We intentionally chose a situation that is quite regular so that it is easy to count the points, but the theorem holds for arbitrarily wild configurations.
\medskip

Our use case of this theorem will be a special case, shown in Figure \ref{fig:CBT1} (right), in which each quartic  decomposes into some combination of conics and lines, as shown in the image.
The short version of what we get is: {\it If the 13 coincidences marked by white points are present, the other three incidences follow automatically.} With a few more details:

\noindent
{\bf Special case of Theorem \ref{CBT4}:}
{\it
Assume the red quartic decomposes into four lines and the blue quartic decomposes into
two lines and a conic, generating 16 points of  intersection. For all involved conics and lines, assume the specific intersection combinatorics as indicated by the picture.
Consider a green conic passing through~8 of these points (the green ellipse)
and another green conic passing through~5 of the remaining points (the green hyperbola).
Then the three additional intersections between the red lines and the 
blue conics must also lie on the last green conic (marked by little circles). }

To see that this statement is just an instance of Theorem \ref{CBT4}, observe that the four red lines form a quartic and the two blue lines together with the blue conic form a quartic as well.
Furthermore, assume that the blue conic is not degenerate, and
assume that we have 16 distinct points of intersection. 
The three intersections marked by circles all lie on the non-degenerate blue conic, so they are not collinear. Hence, any quartic passing through the white points must also pass through those three points by Theorem \ref{CBT4}.
 The two green conics together form a quartic that passes through the 13 white points, and the three intersections marked by circles are still not collinear, so we can apply Theorem \ref{CBT4} and see that the green quartic must also be incident to the remaining three intersections. 
These three points cannot lie on the green ellipse since the red lines already have two intersections with that ellipse. Hence, they must lie on the other green conic (the hyperbola).

\medskip
\begin{figure}[ht]
\centering
\includegraphics[width=.47\textwidth]{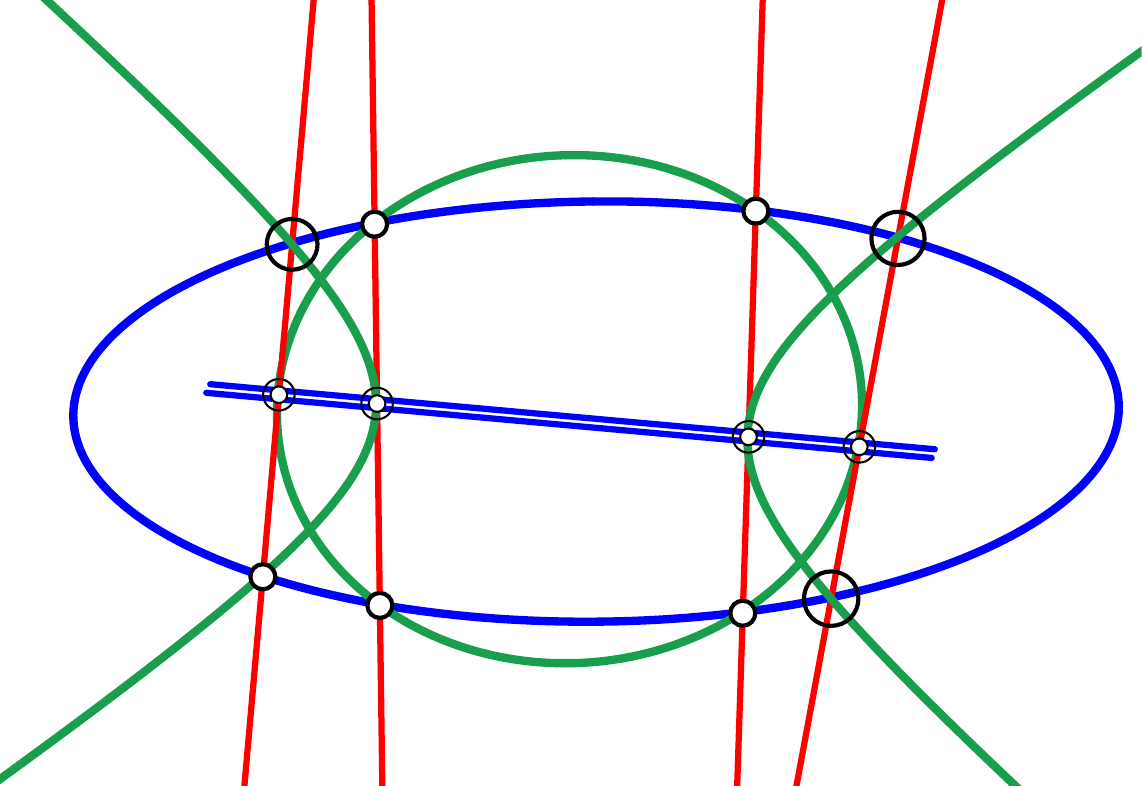}$\quad$
\includegraphics[width=.47\textwidth]{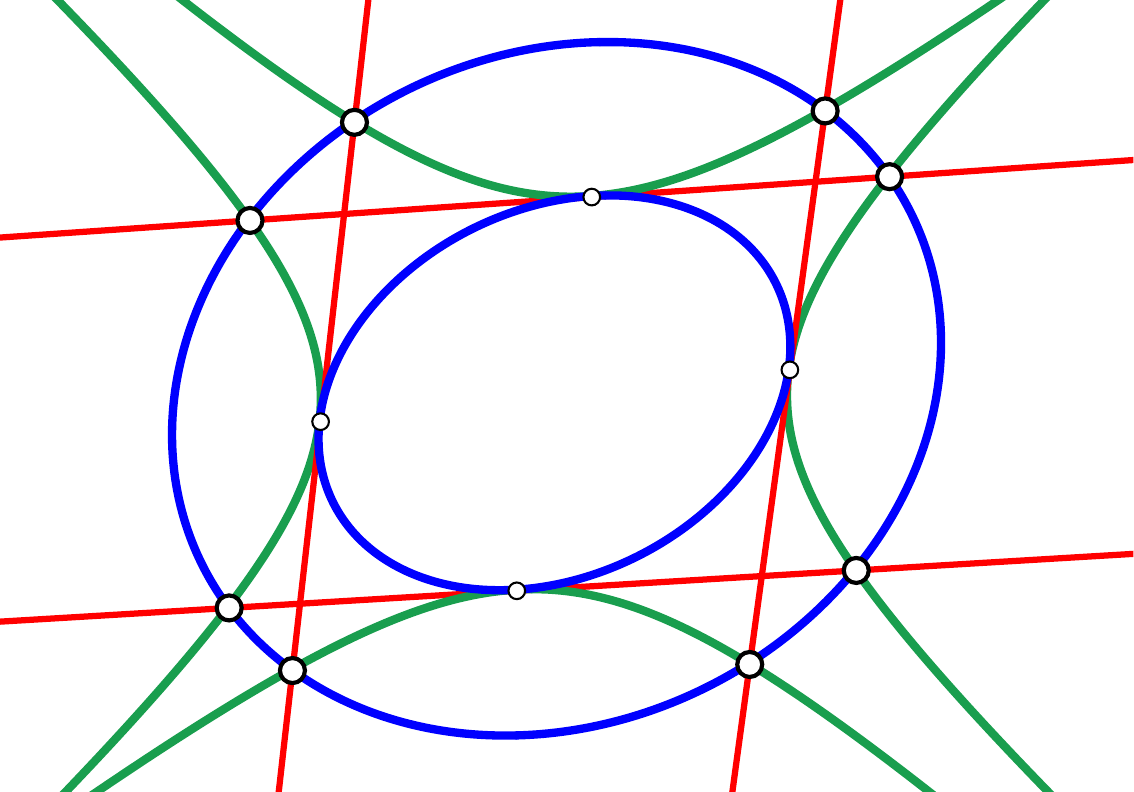}

\begin{picture}(0,0)
\end{picture}
\caption{The limit case in which the inner two lines become a double line (left) vs. an  analogous instance where the touching points are not collinear (right).}	
\label{fig:CBT2}
\end{figure}

\begin{proof}{\it (of Theorem~\ref{CGT5} and all the other theorems):}
The only thing that is missing in order to prove Theorem~\ref{CGT5} is to move 
the configuration we just considered into a slightly degenerate situation by making
the two blue lines coincide. For this, consider the situation of Figure \ref{fig:CBT2} on the left.
We argue geometrically (the argument can also made rigorous by performing the limit process analytically). We may consider the intersection of the blue lines $b_1$ and $b_2$ in Figure
\ref{fig:CBT1} on the right and consider the pencil of lines defined by $b_1+ \lambda b_2$.
If $\lambda$ approaches $0$ the points of intersection of each of the blue lines with the four red lines
will get closer and closer until they finally coincide at $\lambda=0$. During this process 
they will become touching points for the green conics. The final situation is shown in 
Figure \ref{fig:CBT2} on the left. The coincidence at the points marked by the circles will be present during the entire process as well as in the limiting case. Comparing this situation 
with Figure \ref{fig:CGTProjDual}, we see that this is exactly the same situation (just with a different color coding). Through our discussion in the last section, this proves Theorem~\ref{CGT5}.
\end{proof}

By  constructing the blue double line prior to  our conclusion we  forced the geometric situation to be unique. One might wonder where the ambiguity that originally led to the flawed formulations of the CQT hides in \emph{this} context. The crucial point lies in the ambiguity that arises when we construct a conic tangent to one line and incident to four points.
The corresponding situation is shown in Figure \ref{fig:CBT2} on the right.
It arises from the same incidence pattern, but a reversed construction sequence.
We start with the outer blue conic and the four red lines, and we form the eight intersections of red and blue lines. Next, we ask for the green conics through four white points
and tangent to the red lines (combinatorics as indicated by the picture). There {\it may} be two ways to choose each of the green conics. The Cayley--Bacharach Theorem tells us that the four touching points must belong to a second blue conic and that these  points must be double points of the intersection of this conic with the red lines. However, this might be satisfied by two different intersection combinatorics, corresponding to the left and right of Figure \ref{fig:CBT2}.

\subsection{Historical remarks}

Some additional historical remarks may be appropriate here. The original argument of Chasles followed a related, but slightly different way of reasoning. He considered the configuration  arising  as a compound of 6 conics (two lines taken together forming a degenerate conic, and double lines also taken to be a degenerate conic). The equations of three conics are linearly dependent if the conics have four points in common (again, counted with multiplicities).
In our picture, the 16 points of intersection can be subdivided into 4 disjoint quadruples,
each of them expressing a dependency between 3 conics. Figure \ref{fig:CBT3} illustrates this situation. It is exactly the same image we already had, but now the objects are drawn with thin and thick lines to differentiate the six conics (there is a thin and a thick conic for each color).
We now consider the equations of  the conics as \emph{points} in the 5-dimensional projective space  $\mathbb{P}^5$ of all planar conics. Being dependent means that  the corresponding conic-points in that 5-dimensional space are collinear. The way Chasles thought about this situation was as follows:
The fact that $\mathcal{B}_2$, $\mathcal{R}_1$ and $\mathcal{G}_2$ have four points in common
means that the three conic-points are collinear in $\mathbb{P}^5$. Similarly, 
the fact that $\mathcal{B}_2$, $\mathcal{R}_2$ and $\mathcal{G}_1$ have four points in common translates into collinearity of the corresponding conic-points in $\mathbb{P}^5$. Since the two lines in $\mathbb{P}^5$ intersect in the conic-point $\mathcal{B}_2$, they span just a 2-dimensional plane in $\mathbb{P}^5$. Hence, the lines spanned by $\mathcal{R}_2$ and $\mathcal{G}_2$ and by $\mathcal{R}_1$ and $\mathcal{G}_1$ must intersect in a (conic-)point in $\mathbb{P}^5$. This point is precisely the conic corresponding to $\mathcal{B}_1$, and it must, therefore, have four points in common with $\mathcal{R}_2$ and $\mathcal{G}_2$ and four other points in common with
$\mathcal{R}_2$ and $\mathcal{G}_2$, which is essentially the statement of the claim.
\medskip

\begin{figure}[ht]
\centering

\includegraphics[width=.45\textwidth]{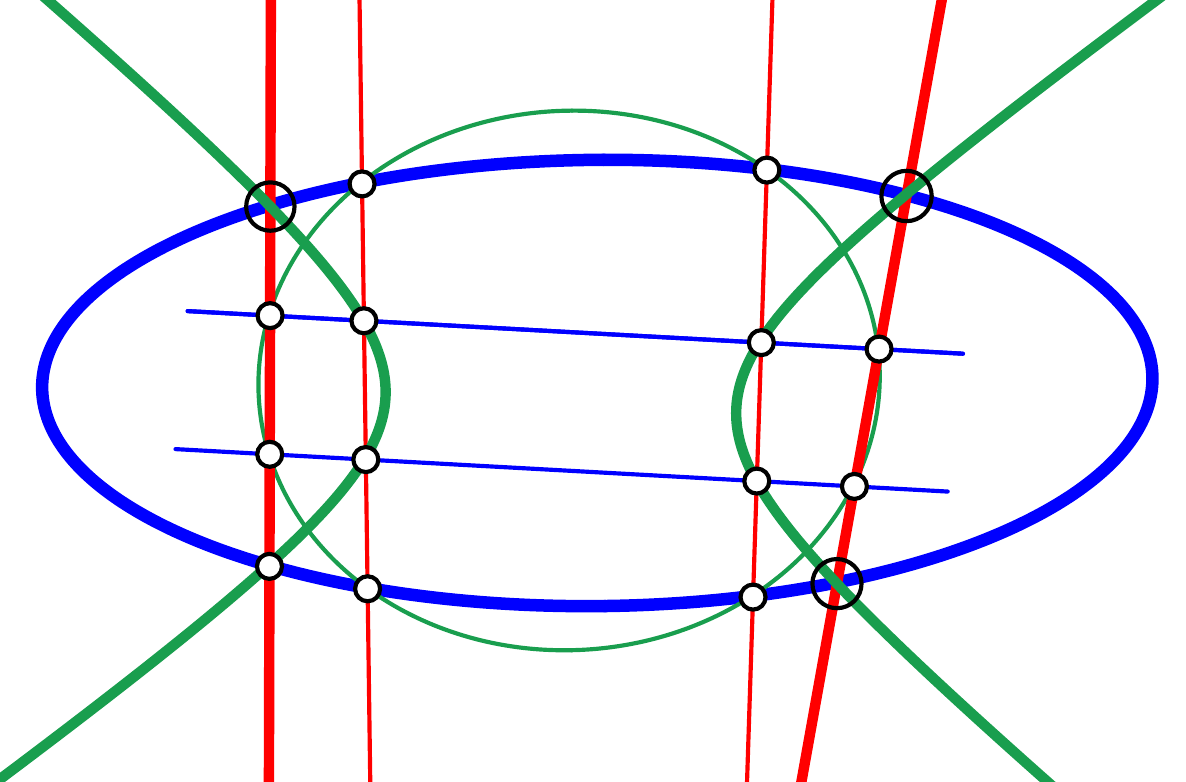}$\qquad$
\includegraphics[width=.36\textwidth]{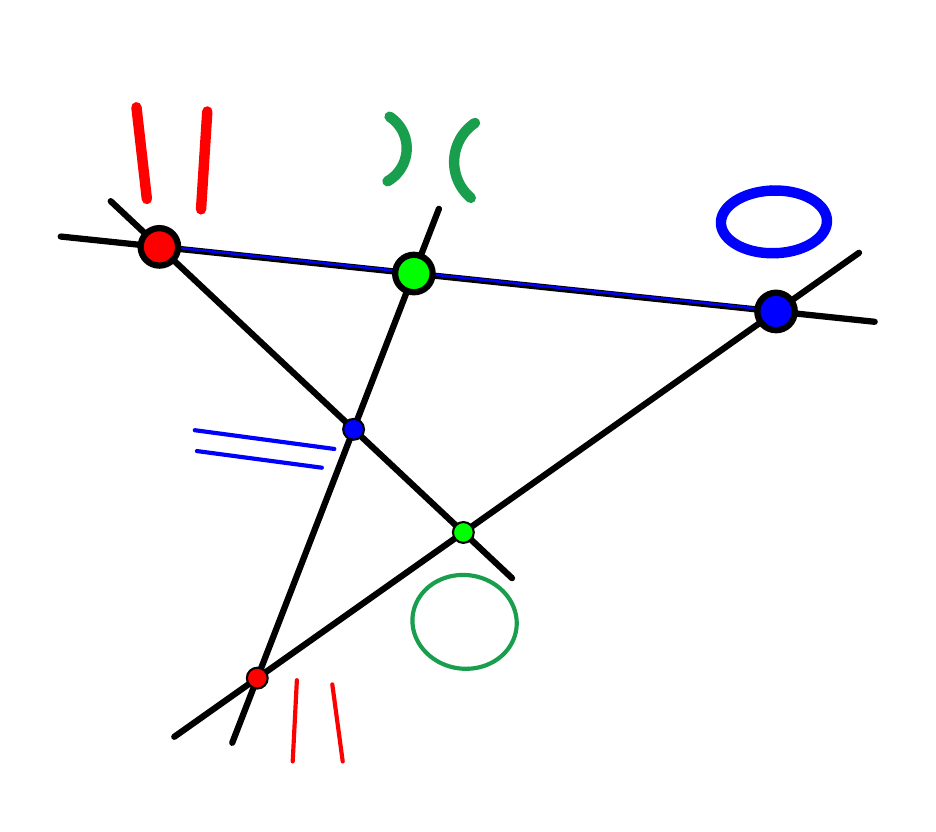}

\begin{picture}(0,0)
\put(-148,8){\footnotesize {$\mathcal{G}_1$}}
\put(-158,58){\footnotesize {$\mathcal{B}_1$}}
\put(-137,62){\footnotesize {$\mathcal{B}_2$}}
\put(-77,106){\footnotesize {$\mathcal{G}_2$}}
\put(-123,118){\footnotesize {${\mathcal{R}_1}^a$}}
\put(-33,118){\footnotesize {${\mathcal{R}_1}^b$}}
\put(-100,8){\footnotesize {${\mathcal{R}_2}^a$}}
\put(-70,8){\footnotesize {${\mathcal{R}_2}^b$}}
\put(40,80){\footnotesize {${\mathcal{R}_1}$}}
\put(46,35){\footnotesize {${\mathcal{R}_2}$}}
\put(80,65){\footnotesize {${\mathcal{B}_2}$}}
\put(70,90){\footnotesize {${\mathcal{G}_1}$}}
\put(96,50){\footnotesize {${\mathcal{G}_2}$}}
\put(130,70){\footnotesize {${\mathcal{B}_1}$}}

\end{picture}
\caption{Interpreting the CQT as a Chasles' quadrilateral. (left) The situation of CQT, as intersections among six conics. (right) The conics represented as points with collinearities in the projective space of conics.}	
\label{fig:CBT3}
\end{figure}
\medskip

In fact, Chasles did much more. He stated four meta-theorems of this kind for general conics, and from them he derived numerous specific claims by specialising them into various degenerate situations of all kinds. In particular, in the dual space (which was the setup in which Chasles originally represented his thoughts) one gets many surprising facts about confocal conics and circles (like our Chasles' Quadrilateral Theorem). Unfortunately, in stating these Corollaries of his meta-theorems, he created several flawed statements that suffer from the ambiguous situations. 
\medskip

Nevertheless, Chasles was way ahead of his time. His 1843 article \cite{Cha1843}  must have appeared close to metaphysics to his contemporaries. He wrote a second article clarifying many of his thoughts
in 1860 \cite{Cha1860}.  A nice semi-modern treatment of the matter can be found in 
\cite{Din03}.

\subsection{Getting rid of the circle}\label{sec:proofThm4}

We are still lacking a proof of Theorem~\ref{CGTtangent2} that we presented earlier (see Figure~\ref{fig:tangents}, left): the version of the CQT that relies on three confocal conics and bypasses the circle.
In exactly the same fashion as we did before, this theorem can be rephrased using the absolute circle points  $\mathsf{I}$
and  $\mathsf{J}$ to express the fact that three conics are confocal.
This time, the circle points are not even explicitly needed. Expressing confocality reduces to being tangent to the same four lines. 
{This can be seen as follows: Given a conic $\mathcal{C}$ we can construct its foci by the construction of} 
Figure~\ref{fig:foci}  {on the right: Form the tangents of  $\mathsf{I}$
and  $\mathsf{J}$ to the conic and consider their remaining four intersections. If a second conic has the same foci then it must result in the same tangents. 
Thus, up to projective transformations a collection of conics is confocal if they have four tangents in common. No other Euclidean  terms are relevant in Theorem~\ref{CGTtangent2}, hence we can discard the presence of  $\mathsf{I}$
and  $\mathsf{J}$.}

We now follow the same Cayley–Bacharach mechanism as in the previous section: Dualising $\rightarrow$ indicating a Cayley–Bacharach situation $\rightarrow$ organising hypothesis and conclusion such that no ambiguity arises. 
{Figure~\ref{fig:CBT5}  dualises the configuration of  Theorem~\ref{CGTtangent2}.  Being confocal translates to the fact that $\mathcal{R}$, $\mathcal{G}$ and $\mathcal{X}$ 
pass through the same four points (white). The tangents through $P$ and $Q$ and the tangents through $S$ and $T$ translate into two degenerate conics. 
The four tangents meeting at $M$ corresponds to four collinear (white) points.
The corresponding configuration is shown in Figure~\ref{fig:CBT5} (left). It is a  Cayley--Bacharach configuration with a blue, a red and a green quartic. Hence, three of the single incidences follow from the others. The white points incident with all three conics  translate to the three conics in the original  theorem being confocal. The black points
represent the original tangents. The coincidence of three of the black points with the blue conic corresponds to the conclusion.
The associated  Chasles' quadrilateral is shown on the right.

\begin{figure}[ht]
\centering

\includegraphics[width=.45\textwidth]{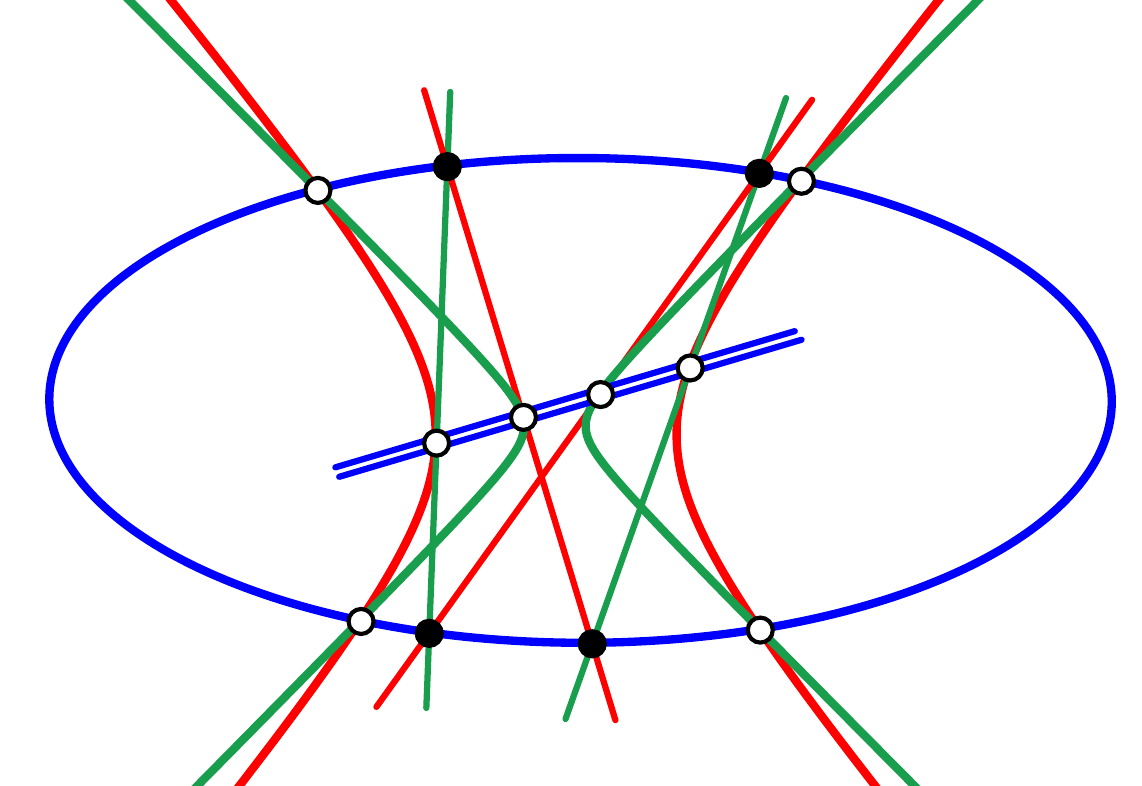}$\qquad$
\includegraphics[width=.36\textwidth]{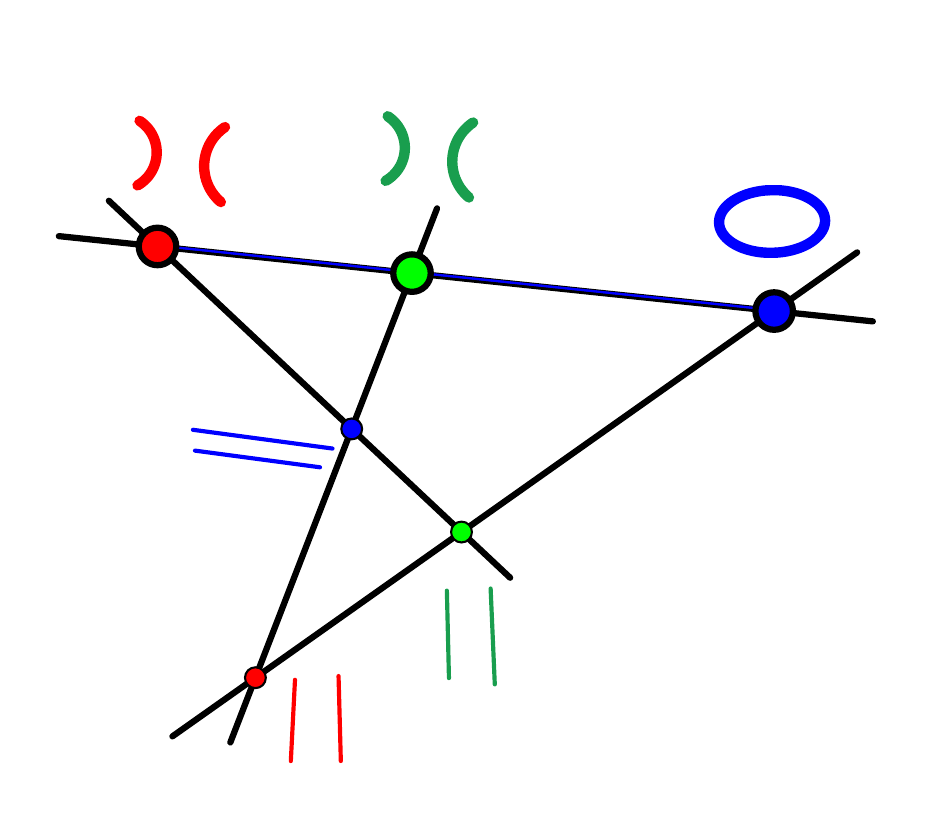}

\begin{picture}(0,0)
\put(40,80){\footnotesize {${\mathcal{R}}$}}
\put(42,62){\footnotesize {$M$}}
\put(75,90){\footnotesize {${\mathcal{G}}$}}
\put(130,70){\footnotesize {${\mathcal{X}}$}}
\put(60,13){\footnotesize {$S$}}
\put(72,14){\footnotesize {$T$}}
\put(82,25){\footnotesize {$P$}}
\put(92,25){\footnotesize {$Q$}}
\put(-100,110){\footnotesize {$S$}}
\put(-40,108){\footnotesize {$T$}}
\put(-90,109){\footnotesize {$P$}}
\put(-50,108){\footnotesize {$Q$}}
\put(-115,10){\footnotesize {${\mathcal{R}}$}}
\put(-132,10){\footnotesize {${\mathcal{G}}$}}
\put(-156,60){\footnotesize {${\mathcal{X}}$}}
\put(-116,50){\footnotesize {$M$}}
\end{picture}
\caption{The situation corresponding to  Theorem~\ref{CGTtangent2}. Cayley--Bacharach interpretations (left), Chasles' quadrilateral in the projective space of planar conics (right). }	
\label{fig:CBT5}
\end{figure}
\medskip

 {After this translation},  Theorem~\ref{CGTtangent2} is immediately proved by the Cayley--Bacharach Theorem (or equivalently by the Chasles' quadrilateral argument). 
In fact, the underlying configuration shares exactly the same combinatorics as the Bobenko--Akopyan statement given in Figure~\ref{fig:BobAk}. {We indicated this  by the color coding.}

\section{Conclusion}

The analysis presented in this article clarifies a subtle but persistent ambiguity in many formulations of  Chasles' Quadrilateral Theorem.
The classical statements, beginning with Chasles and Darboux and reappearing in modern literature, often extend a correct equivalence by an additional `moreover' clause that presupposes the uniqueness of a conic in a given pencil. In symmetric configurations, however, two distinct circles can share the same four tangents, and the supplementary claim then fails, since ambiguities arise.
Reformulating the theorem in its projective form sheds light on the deeper underlying reasons for these ambiguities. In resolving the issue, we find that the existence of a circle or conic must be stated relative to the concurrence of the tangents, not the other way around.
Within this perspective,  Chasles' Quadrilateral Theorem becomes a clean corollary of the Cayley--Bacharach theorem.

\bibliographystyle{plain} 
{\small
\bibliography{refsd}

@article{Cha1860,
	author = {Chasles, Michel},
	journal = {Comptes Rendus des s´eances de l'Acad´emie des Sciences},
	pages = {623--633},
	title = {R{\'e}sum{\'e} d'une th{\'e}orie des coniques sph{\'e}riques homofocales.},
	volume = {50},
	year = {1860}}

@article{Cha1843,
	author = {Chasles, Michel},
	journal = {Comptes Rendus hebdomadaires de s{\'e}ances de l'Acad{\'e}mie des sciences},
	pages = {838--844},
	title = {Propri{\'e}t{\'e}s g{\'e}n{\'e}rales des arcs d'une section conique, dont la difference est rectifiable},
	volume = {17},
	year = {1843}}

@article{Bach1886,
	author = {Bacharach, Isaak},
	date = {1886/06/01},
	doi = {10.1007/BF01444338},
	id = {Bacharach1886},
	isbn = {1432-1807},
	journal = {Mathematische Annalen},
	number = {2},
	pages = {275--299},
	title = {Ueber den Cayley'schen Schnittpunktsatz},
	url = {https://doi.org/10.1007/BF01444338},
	volume = {26},
	year = {1886}}

@inbook{Din03,
	author = {Dingeldey, Friedrich},
	language = {ger},
	pages = {6--165},
	publisher = {Springer Fachmedien Wiesbaden GmbH},
	series = {Encyklop{\"a}die der mathematischen Wissenschaften mit Einschluss ihrer Anwendungen},
	title = {Kegelschnitte und Kegelschnittsysteme},
	url = {http://eudml.org/doc/202727},
	volume = {3, T.2, H.1},
	year = {1903}}

@article{BGRGT24a,
	archiveprefix = {arXiv},
	author = {Berman, Leah Wrenn AND G{\'e}vay, G{\'a}bor AND Tabachnikov, Serge AND Richter-Gebert, J{\"u}rgen},
	eprint = {2408.09203},
	journal = {arXiv preprint arXiv: 2408.09203},
	primaryclass = {math.CO},
	title = {When {G}r{\"u}nbaum meets {P}oncelet -- Infinite Classes of Movable $(n_4)$ Configurations},
	url = {https://arxiv.org/abs/2408.09203},
	year = {2024}}

@article{Rey1896,
	author = {Reye, Theodor},
	journal = {Naturforschenden Gesellschaft in Z{\"u}rich},
	number = {2},
	pages = {65--75},
	title = {Beweis einiger {S}{\"a}tze von {C}hasles {\"u}ber konfokale {K}egelschnitte.},
	volume = {14},
	year = {1896}}

@book{ChaGra1841,
	author = {Chasles, Michel AND Graves, Charles},
	publisher = {Dublin: For {G}rant and {B}olton},
	title = {Two Geometrical Memoirs On the General Properties of Cones of the Second Degree, and On the Spherical Conics},
	year = {1841}}

@book{Cha1865,
	author = {Chasles, Michel},
	publisher = {Gauthier-Villars, Paris},
	title = {Trait{\'e} des sections coniques: faisant suite au trait{\'e} de g{\'e}om{\'e}trie sup{\'e}rieure},
	year = {1865}}

@book{AlkZas07,
	author = {Akopyan, Arseniy V. AND Zaslavsky, Aleksej A.},
	publisher = {American Mathematical Society, Providence},
	series = {Mathematical World},
	title = {Geometry of Conics},
	volume = {26},
	year = {2007}}

@article{AkBo18,
	author = {Akopyan, Arseniy V. AND Bobenko, Alexander I.},
	journal = {Trans. Amer. Math. Soc.},
	number = {2825-2854},
	title = {Incircular nets and confocal conics},
	volume = {370},
	year = {2018}}

@book{Dar17,
	author = {Darboux, Gaspard},
	publisher = {Paris, Gauther-Villars},
	title = {Principes de Geometrie Analytique},
	year = {1917}}

@inproceedings{Iz19,
	author = {Izmestiev, Ivan},
	booktitle = {Eighteen essays on non-Euclidean geometry},
	editor = {A. Papadopoulos and V. Alberge},
	pages = {263--318},
	publisher = {European Mathematical Society Publishing House},
	title = {Spherical and hyperbolic conics},
	year = {2019}}

@article{IzTa17,
	author = {Izmestiev, Ivan AND Tabachnikov, Serge},
	journal = {Journal of Integrable Systems},
	pages = {1-36},
	title = {Ivory's theorem revisited},
	volume = {2},
	year = {2017}}

@article{StaB,
	author = {Stachel, Hellmuth},
	journal = {J. Geom.},
	title = {The geometry of billiards in ellipses and their Poncelet grids},
	volume = {112(40)},
	year = {2021}}

@book{Ber87,
	author = {Berger, Marcel},
	publisher = {Springer-Verlag, Berlin},
	title = {Geometry. I. II.},
	year = {1987}}

@article{RiKo99,
	author = {Richter-Gebert, J{\"u}rgen AND Kortenkamp, Ulrich},
	journal = {Springer-Verlag, Berlin},
	title = {The {I}nteractive {G}eometry {S}oftware {C}inderella},
	year = {1999}}

@article{EGH96,
	author = {Eisenbud, David AND Green, Mark AND Harris, Joe},
	journal = {Bulletin American Math. Society},
	pages = {295-324},
	title = {Cayley--{B}acharach theorems and conjecture},
	volume = {33},
	year = {1996}}

@book{RG11,
	author = {Richter-Gebert, J{\"u}rgen},
	publisher = {Springer},
	title = {Perspectives on {P}rojective {G}eometry},
	year = {2011}}

@book{DraRa11,
	author = {Dragovi\'c, Vladimir AND Radnovi\'c, Milena},
	publisher = {Birkh{\"a}user/Springer, Basel},
	title = {Poncelet porisms and beyond. Integrable billiards, hyperelliptic Jacobians and pencils of quadrics},
	year = {2011}}
}

\medskip

\noindent
{\footnotesize
{\sc Department of Mathematics \& Statistics, University of Alaska Fairbanks, USA}\\
Email address: {\tt lwberman@alaska.edu}

\noindent
{\sc Department of Mathematics, Technical University of Munich, Germany}\\
Email address: {\tt richter@tum.de}
}

\end{document}